\definecolor{violet}{rgb}{0.0,0.2,0.7}
\definecolor{rouge2}{rgb}{0.8,0.0,0.2}
\newcommand{\R}{\mathbb{R}}
\newcommand{\CC}{\mathbb{C}}
\newcommand{\Z}{\mathbb{Z}}
\newcommand{\N}{\mathbb{N}}
\newcommand{\D}{\mathbb D}
\newcommand{\Cx}{\mathbb{C}^*\times \mathbb{C}^{n-1}}
\renewcommand{\d}{\partial}
\newcommand{\vp}{\varphi}
\renewcommand{\O}{\mathcal{O}}
\newcommand{\Ox}{\mathcal{O}_{X}}
\newcommand{\ep}{\varepsilon}
\newcommand{\la}{\langle}
\newcommand{\ra}{\rangle}
\renewcommand{\ge}{\geqslant}
\renewcommand{\le}{\leqslant}
\newcommand{\Ric}{\mathrm{Ric} \,}
\newcommand{\om}{\omega}
\newcommand{\omvp}{\omega_{\varphi}}
\renewcommand{\D}{\mathbb D}
\newcommand{\PSH}{\mathrm{PSH}}
\newcommand{\Gb}{\mathcal G_{\beta}}
\newcommand{\Eb}{\mathcal E_{\beta}}
\newcommand{\Lb}{\mathcal L_{\beta}}
\newcommand{\pb}{\psi_{\beta}}
\newcommand{\Fb}{F_{\beta}}
\newcommand{\Hb}{H_{\beta}}
\newcommand{\tr}{\mathrm{tr}}
\newcommand{\ddc}{dd^c}
\newcommand{\vpb}{\varphi_{\beta}}
\newcommand{\vpbb}{\bar \varphi_{\beta}}
\newcommand{\Cb}{C_{\beta}}
\newcommand{\Vb}{V_{\beta}}
\renewcommand{\sb}{|s|^{2\beta}}
\newcommand{\sbb}{|s|^{2(1-\beta)}}
\newcommand{\rij}{R_{i\bar j k \bar l}}
\newcommand{\gt}{\tilde{g}}
\newcommand{\tra}{\tr_{\omv}\omb}
\newcommand{\He}{H_{\ep}}
\newcommand{\Gln}{\mathrm{GL}(n,\mathbb C)}
\newcommand{\omb}{\omega_{\beta}}
\newcommand{\ombb}{\bar \omega_{\beta}}
\newcommand{\ombm}{\omega_{\beta, \mathrm{mod}}}
\newcommand{\ome}{\omega_{\rm eucl}}
\newcommand{\omv}{\omega_{\vp_{\beta}}}
\newcommand{\cbe}{\chi_{\beta, \ep}}
\newtheorem*{cor}{Corollary}
\newtheorem*{thma}{Theorem A}
\newtheorem*{thmb}{Theorem B}
\newtheorem*{thmc}{Theorem C}
\numberwithin{equation}{section}
\begin{document}

\frontmatter 

\title[From cones to cusps]{Kähler-Einstein metrics: from cones to cusps}

\date{\today}
\author{Henri Guenancia}
\address{Department of Mathematics \\
Stony Brook University, Stony Brook, NY 11794-3651}
\email{guenancia@math.sunysb.edu}
\urladdr{www.math.sunysb.edu/{~}guenancia}

\subjclass{32Q05, 32Q10, 32Q15, 32Q20, 32U05, 32U15}
\keywords{Kähler-Einstein metrics, conic singularities, cusp singularities, Monge-Ampère equations}

\begin{abstract}
In this note, we prove that on a compact Kähler manifold $X$ carrying a smooth divisor $D$ such that $K_X+D$ is ample, the Kähler-Einstein cusp metric is the limit (in a strong sense) of the Kähler-Einstein conic metrics when the cone angle goes to $0$. We further investigate the boundary behavior of those and prove that the rescaled metrics converge to a cylindrical metric on $\mathbb C^*\times \mathbb C^{n-1}$.
\end{abstract}

\maketitle


\mainmatter
\tableofcontents
\section*{Introduction}

Let $X$ be a compact Kähler manifold of dimension $n$, and $D$ a smooth hypersurface such that $K_X+D$ is ample. A well-known result of Kobayashi \cite{KobR} and Tian-Yau \cite{Tia} asserts the existence of a unique Kähler metric $\om_0$ on $X\smallsetminus D$ with cusp singularities along $D$ and such that $\Ric \om_0=-\om_0$. Recall that $\om_0$ is said to have cusp singularities (or Poincaré singularities) along $D$ if, whenever $D$ is locally given by $(z_1=0)$, $\om_0$ is quasi-isometric to the cusp metric:
\[\om_{\rm cusp} = \frac{i dz_1 \wedge d \bar z_1}{|z_1|^2 \log^2 |z_1|^2} + \sum_{k=1}^n i dz_k \wedge d \bar z_k\]
Moreover, as ampleness is an open condition, there exists $\beta_0>0$ such that for all $0<\beta< \beta_0$, the class $K_X+(1-\beta) D$ is ample. Therefore, results of \cite{CGP, GP, JMR} provide us with a unique Kähler metric $\omb$ on $X\smallsetminus D$ having cone singularities with cone angle $2\pi \beta$ along $D$ and such that $\Ric \omb = -\omb$. Here again, recall that $\omb$ is said to have cone singularities with cone angle $2\pi \beta$ along $D$ if, whenever $D$ is locally given by $(z_1=0)$, $\omb$ is quasi-isometric to the cone metric:
\[\om_{\rm cone} = \frac{i dz_1 \wedge d \bar z_1}{|z_1|^{2(1-\beta)} } + \sum_{k=1}^n i dz_k \wedge d \bar z_k\]
So we have a family of metrics $(\omb)_{0\le \beta<\beta_0}$ on $X\smallsetminus D$ that can actually be viewed as currents on $X$ satisfying the twisted Kähler-Einstein equation:
\[\Ric \omb = - \omb + (1-\beta) [D]\]
A natural question to ask is whether $\om_0$ is the limit, in some suitable sense, of the metrics $\omb$ when $\beta$ goes to $0$. This seems to be a folkore question/result in complex geometry, yet we were not able to find a reference giving a proof of this result.\\

In this note, we show that the answer to the above question is positive, and that the convergence holds both in a weak but global sense and in a strong but local sense:

\begin{thma}
Let $X$ be a compact Kähler manifold carrying a smooth divisor $D$ such that $K_X+D$ is ample.\\ 
Then the Kähler-Einstein metrics $\omb$ with cone angle $2\pi \beta$ along $D$ converge to the Kähler-Einstein cusp metric $\om_0$, both in the weak topology of currents and in the $\mathscr C_{\rm loc}^{\infty}(X\smallsetminus D)$ topology.
\end{thma}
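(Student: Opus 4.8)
The plan is to realize the whole family $(\omb)_{0\le\beta<\beta_0}$ as solutions of a single family of Monge--Ampère equations and then run a maximum--principle and Schauder machine that is uniform in $\beta$. Fix a Kähler form $\theta\in c_1(K_X+D)$, a smooth metric $h$ on $\mathcal O_X(D)$ with curvature $\Theta$ and defining section $s$ (rescaled so that $|s|_h\le 1$), and a smooth volume form $dV$ with $\Ric(dV)=-\theta+\Theta$ (possible since $-\theta+\Theta\in c_1(X)$). A direct Poincaré--Lelong computation shows that, writing $\omb=\theta-\beta\Theta+\ddc\vpb$, the twisted equation $\Ric\,\omb=-\omb+(1-\beta)[D]$ becomes
\[(\theta-\beta\Theta+\ddc\vpb)^n=\frac{e^{\vpb}}{|s|_h^{2(1-\beta)}}\,dV,\]
valid for $0\le\beta<\beta_0$, the value $\beta=0$ being exactly the cusp equation of Kobayashi and Tian--Yau. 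Subtracting the $\beta=0$ equation and using $\ddc\log|s|_h^2=-\Theta$ away from $D$, I would recast everything relative to the cusp metric itself: setting $w_\beta:=\vpb-\vpo+\beta\log|s|_h^2$ one gets $\omb=\om_0+\ddc w_\beta$ and $(\om_0+\ddc w_\beta)^n=e^{w_\beta}\,\om_0^n$.

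The heart of the argument is a two-sided $\mathscr C^0$ bound on $w_\beta$ away from $D$, via the maximum principle. Since $\vpb$ stays bounded near $D$ (conic regularity, \cite{CGP,GP,JMR}) while $\vpo\to-\infty$ there (cusp regularity, \cite{KobR,Tia}), and since $\beta\log|s|_h^2\to-\infty$ dominates, one has $w_\beta\to-\infty$ along $D$; hence $\sup w_\beta$ is attained on $X\smallsetminus D$, where $\ddc w_\beta\le0$ forces $\omb\le\om_0$ and thus $e^{w_\beta}\le1$, i.e. $w_\beta\le0$ everywhere. For the lower bound I would instead look at an interior minimum of $\vpb-\vpo$, which exists because $\vpb-\vpo\to+\infty$ along $D$; at such a point $\ddc(\vpb-\vpo)\ge0$, so $\omb\ge\om_0-\beta\Theta$, and combined with the elementary estimate $(\om_0-\beta\Theta)^n\ge e^{-C\beta}\om_0^n$ (valid for small $\beta$ since $\Theta$ is $\om_0$-bounded, $\om_0$ dominating a fixed Kähler form) this yields $\vpb-\vpo\ge-C\beta$. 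Together with $w_\beta\le0$, i.e. $\vpb\le\vpo-\beta\log|s|_h^2$, the bounds give $\vpo-C\beta\le\vpb\le\vpo-\beta\log|s|_h^2$, so that $\vpb\to\vpo$ uniformly on every compact subset of $X\smallsetminus D$.

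With this $\mathscr C^0$ control the higher regularity is routine but must be made uniform in $\beta$. On a compact $K\subset\subset X\smallsetminus D$ the metric $\om_0$ is a fixed smooth Kähler form and the density $e^{w_\beta}$ is uniformly bounded above and below, so the equation is uniformly elliptic there; I would then run, in order, the Aubin--Yau Laplacian estimate for $\omb^n=e^{w_\beta}\om_0^n$ (which closes using only the $\mathscr C^0$ bound on $w_\beta$ and the automatic curvature bounds of $\om_0$ on $K$) to get $C^{-1}\om_0\le\omb\le C\om_0$, then Evans--Krylov for a uniform $\mathscr C^{2,\alpha}(K)$ bound, then Schauder bootstrapping for uniform $\mathscr C^{k,\alpha}(K)$ bounds, all constants independent of $\beta$. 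Arzelà--Ascoli extracts from any sequence $\beta_j\to0$ a $\mathscr C^\infty_{\rm loc}(X\smallsetminus D)$-convergent subsequence whose limit solves the cusp equation and, by the $\mathscr C^0$ step, equals $\vpo$; uniqueness of the limit upgrades this to convergence of the whole family, proving $\omb\to\om_0$ in $\mathscr C^\infty_{\rm loc}(X\smallsetminus D)$.

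Finally, for weak convergence of currents it suffices to show $\vpb\to\vpo$ in $L^1(X)$. The two-sided bound gives $\vpo-C\le\vpb\le C$, so $|\vpb|\le|\vpo|+C'$ pointwise; since the cusp potential $\vpo$ blows up only like $\log\log|s|_h^{-2}$ it lies in $L^1(X)$, and dominated convergence applies to the a.e. limit, giving $\vpb\to\vpo$ in $L^1$ and hence $\omb=\theta-\beta\Theta+\ddc\vpb\to\theta+\ddc\vpo=\om_0$ weakly. The step I expect to be the main obstacle is the $\mathscr C^0$ estimate: one must exploit the genuinely different boundary behaviours of the conic and cusp potentials to place the extrema of $w_\beta$ and of $\vpb-\vpo$ in the interior, since the simultaneous change of cohomology class $c_1(K_X+(1-\beta)D)\to c_1(K_X+D)$ and of singularity type (cone to cusp) is exactly what the barriers $\vpo-C\beta$ and $\vpo-\beta\log|s|_h^2$ have to absorb.
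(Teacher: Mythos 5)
Your $\mathscr C^0$ stage is correct, and it takes a genuinely different route from the paper. Where the paper proves the weak convergence variationally (the functionals $\Gb$, the semicontinuity Lemma \ref{lem:sc}, and the Kähler-current property of $\om_0$ to place $\vp_0$ in every $\PSH(X,\om-\beta\theta)$), you write the whole family over the single cusp solution, $\omb=\om_0+\ddc w_\beta$ with $w_\beta=\vpb-\vp_0+\beta\log|s|^2$ and $(\om_0+\ddc w_\beta)^n=e^{w_\beta}\om_0^n$, and both barrier arguments are sound: $w_\beta\to-\infty$ along $D$ because $\beta\log|s|^2$ dominates $-\vp_0\sim\log\log^2|s|^2$, giving $w_\beta\le 0$; and $\vpb-\vp_0\to+\infty$ along $D$, giving $\vpb-\vp_0\ge -C\beta$ once one uses that $\om_0$ dominates a fixed Kähler form so that $(\om_0-\beta\Theta)^n\ge e^{-C\beta}\om_0^n$. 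The resulting pinching $\vp_0-C\beta\le\vpb\le\vp_0-\beta\log|s|^2$ identifies the limit directly and even yields a rate; note the paper explicitly remarks that the simpler monotonicity argument it sketches cannot identify the limit near $D$, so your pair of barriers genuinely buys something the variational proof obtains by other means. One small slip: $w_\beta\le 0$ does \emph{not} give $\vpb\le C$ uniformly, since the majorant $\vp_0-\beta\log|s|^2$ is unbounded above near $D$; but this is harmless, because $\vpb\le C-\beta_0\log|s|^2\in L^1(X)$ suffices for dominated convergence, or one can get $\sup\vpb\le C$ by the perturbed maximum principle applied to $\vpb+\ep\log|s|^2$, exactly as in Proposition \ref{prop:est0}.

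The genuine gap is the higher-regularity step. You invoke ``the Aubin--Yau Laplacian estimate on a compact $K$, closing using only the $\mathscr C^0$ bound on $w_\beta$,'' but that estimate is a maximum-principle argument: on $K$ the test function $\log \tr_{\om_0}(\omb)-Aw_\beta$ may attain its maximum on $\partial K$, where nothing is known about $\tr_{\om_0}(\omb)$, so the argument does not close. Nor can one appeal to a purely interior $\mathscr C^2$ estimate from $\mathscr C^0$ bounds for complex Monge--Ampère equations: Pogorelov-type examples (solutions of the form $(1+|z_2|^2)\,|z_1|^{2-2/n}$ with smooth positive right-hand side) rule such estimates out, and completeness is no escape either, since the conic metrics $\omb$ are incomplete near $D$. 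The repair is what Proposition \ref{lap} of the paper does: run Chern--Lu/Siu--Yau \emph{globally} on $X\smallsetminus D$ against a fixed cusp reference $\om_P$ of bounded geometry, with the test function $H=\log\tr_{\om_P}(\omb)-(C+1)\pb$ chosen so that $H\to-\infty$ along $D$ (for fixed $\beta$, $\tr_{\om_P}(\omb)$ is bounded since the cone metric is dominated by the cusp metric, while $\pb\to+\infty$), forcing an interior maximum. Two uniformity points you would still have to supply there are the absorption of $\beta\,\tr_{\omb}\theta$ by $\frac12\tr_{\omb}(\om_P)$ --- the paper stresses one cannot simply assert that $\omb$ dominates a fixed Kähler form --- and the bound $|\Delta_{\om_P}\Fb|\le C$, obtained via quasi-coordinates. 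Once the weighted bound $\omb\le C(-\log|s|^2)^A|s|^{-2}\om$ is in place, your Evans--Krylov/Schauder/Arzelà--Ascoli endgame, and the deduction of weak convergence from the $L^1$ domination, are fine.
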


In particular, the metric spaces $(X\smallsetminus D, \omb)$ converge in pointed Gromov-Hausdorff topology to $(X\smallsetminus D, \om_0)$.

The strategy of the proof consists of adapting the stability arguments of \cite{BG} to this setting where the cohomology classes do not evolve in a monotonic manner. 
Once the weak convergence is obtained, it is sufficient to establish a priori estimates for the potentials of $\omb$ in order to get the smooth convergence on the compact subsets of $X\smallsetminus D$. Our main tool will actually be the maximum principle.

Let us note that the exact same proof would actually extend to the case where $D$ is merely a simple normal crossing divisor, yet we chose to stick with the smooth case for the sake of clarity.\\

It turns out that Theorem 1 above does not say much about what happens near the divisor. Typically, it is hard to see why a singularity in $1/|s|^{2(1-\beta)}$ becomes $1/|s|^2 \log^2 |s|^2$ when $\beta \to 0$. To get a better insight of this, one should look at the local case of the punctured disk in $\mathbb C$: there, the metric
$\om_{\beta, \D^*}=\frac{\beta^2 i dz \wedge d\bar z}{|z|^{2(1-\beta)}(1-|z|^{2\beta})^2}$ has conic singularities at $0$ with cone angle $2\pi \beta$ and it has constant negative curvature. Then, when $\beta$ goes to $0$, $\omega_{\beta, \D^*}$ converges pointwise to the Poincaré metric $\omega_{P, \D^*}=\frac{i dz \wedge d\bar z}{|z|^2 \log^2 |z|^2}$.\\

In Section \ref{sec:curv}, we extend this observation to the global case by constructing in each Kähler cohomology class a metric with conic singularities that is uniformly (in $\beta$) equivalent to the (higher dimensional) local model $\om_{\beta, \D^*}$ near the divisor $D$. 
We will also show the the holomorphic bisectional curvature of this model is bounded and that this bound does not depend on $\beta$ as long as $\beta$ is sufficiently close to $0$ (more precisely, we need $\beta \le 1/2$, cf Theorem \ref{curv}).

Then, in Section \ref{sec:unif}, we prove optimal $L^{\infty}$ and Laplacian estimates that rely on a slightly subtle application of the maximum principle as well as on the curvature bound previously established. They show that the conic Kähler-Einstein metric is uniformly equivalent to the model metric:

\begin{thmb}
There exists a constant $C>0$ independent of $\beta$ such that on any coordinate chart $U$ where $D$ is given by $(z_1=0)$, the conic Kähler-Einstein metric $\omb$ satisfies:
\[C^{-1}\,\om_{\beta, \rm mod}  \le \omb \le C \, \om_{\beta, \rm mod}\]
where
\[\om_{\beta, \rm mod}:= \frac{\beta^2 i dz_1 \wedge d\bar z_1}{|z_1|^{2(1-\beta)}(1-|z_1|^{2\beta})^2}+\sum_{k=2}^n i dz_k \wedge d\bar z_k \]
\end{thmb}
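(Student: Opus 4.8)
The plan is to run the classical Aubin--Yau scheme (zeroth and second order estimates) with the model $\ombm$ playing the role of the reference metric, the whole point being that every constant produced must be traced back to be independent of $\beta$. I would write $\omb = \ombm + \ddc \vpb$, which is legitimate since $\omb$ and $\ombm$ lie in the same cohomology class $2\pi c_1(K_X+(1-\beta)D)$. Feeding this into the twisted Kähler--Einstein equation $\Ric \omb = -\omb + (1-\beta)[D]$ turns it into a complex Monge--Ampère equation
\[(\ombm + \ddc \vpb)^n = e^{\vpb + \Fb}\, \ombm^n,\]
where $\Fb$ measures the failure of $\ombm$ to be Kähler--Einstein. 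The first task is to check, using the explicit one-dimensional model $\om_{\beta, \D^*}$ and its uniform equivalence to $\ombm$ established in Section~\ref{sec:curv}, that $\sup_{0<\beta\le 1/2}\|\Fb\|_{L^\infty(X\smallsetminus D)}<\infty$.

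With this in hand the $L^\infty$ estimate is immediate from the maximum principle: at a point where $\vpb$ is maximal one has $\ddc\vpb\le 0$, hence $e^{\vpb+\Fb}\le 1$ and $\vpb \le \|\Fb\|_{L^\infty}$, the minimum being handled symmetrically. The only care needed is that $\vpb$ is bounded and its extrema are attained (or approached) in the interior of $X\smallsetminus D$, which follows from the fact that $\omb$ and $\ombm$ share the same conic behaviour along $D$, so that $\vpb$ extends as a bounded function. This yields $\|\vpb\|_{L^\infty}\le C$ with $C$ independent of $\beta$.

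For the Laplacian estimate I would invoke the Chern--Lu inequality for the identity map $(X\smallsetminus D,\omb)\to (X\smallsetminus D,\ombm)$. Since $\Ric\omb = -\omb$ away from $D$ and the holomorphic bisectional curvature of $\ombm$ is bounded above by some $B$ independent of $\beta\le 1/2$ by Theorem~\ref{curv}, it reads
\[\Delta_{\omb}\log \mathrm{tr}_{\omb}\ombm \ge -1 - B\, \mathrm{tr}_{\omb}\ombm .\]
Considering $H = \log \mathrm{tr}_{\omb}\ombm - (B+1)\vpb$ and using $\Delta_{\omb}\vpb = n - \mathrm{tr}_{\omb}\ombm$, one gets $\Delta_{\omb}H \ge \mathrm{tr}_{\omb}\ombm - C'$, so that at an interior maximum of $H$ the trace $\mathrm{tr}_{\omb}\ombm$ is bounded; combined with the $L^\infty$ bound on $\vpb$ this gives $\mathrm{tr}_{\omb}\ombm\le C$ everywhere, uniformly in $\beta$, i.e. $\omb\ge C^{-1}\ombm$.

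The reverse inequality follows from the Monge--Ampère equation: since the volume ratio $\omb^n/\ombm^n = e^{\vpb+\Fb}$ is bounded above and below uniformly in $\beta$, the elementary inequality $\mathrm{tr}_{\ombm}\omb \le \tfrac{1}{(n-1)!}(\mathrm{tr}_{\omb}\ombm)^{n-1}\,\omb^n/\ombm^n$ converts the upper trace bound into $\omb\le C\,\ombm$, completing the two-sided estimate. The delicate point of the whole argument---flagged already in the introduction---is the application of the maximum principle on the \emph{non-compact} manifold $X\smallsetminus D$: I expect the main obstacle to be ensuring that $H$ attains its supremum in the interior (or constructing a barrier near $D$ forcing the maximum inward), which demands a precise, $\beta$-uniform understanding of the behaviour of $\mathrm{tr}_{\omb}\ombm$ as $z_1\to 0$. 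This is exactly where the uniform quasi-isometry between $\omb$, $\ombm$ and the one-dimensional model $\om_{\beta, \D^*}$ must be exploited.
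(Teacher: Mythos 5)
Your strategy coincides with the paper's in all essentials: rewrite the Kähler--Einstein equation as a Monge--Ampère equation relative to the reference conic metric of Section~\ref{sec:curv} with uniformly bounded datum $\Fb$, prove a $\beta$-uniform $L^\infty$ bound by the maximum principle, then apply Chern--Lu with the uniform bisectional curvature bound of Theorem~\ref{curv} for one inequality and the elementary trace--volume inequality for the reverse (which the paper leaves implicit and you usefully spell out). But there is one concrete error in your setup: the ansatz ``KE metric $=\ombm+\ddc\vpb$'' is not available, because the global reference metric $\omb$ of Section~\ref{sec:curv} is $\ddc$-cohomologous to $\om$ and hence lies in $c_1(K_X+D)$, whereas the Kähler--Einstein metric $\omv$ lies in $c_1(K_X+(1-\beta)D)$; the two classes differ by $\beta c_1(D)\neq 0$, so no global potential $\vpb$ with your property exists, and the two identities your argument rests on ($\ddc\vpb\le 0$ at an interior maximum forcing $e^{\vpb+\Fb}\le 1$, and $\Delta_{\omv}\vpb=n-\tr_{\omv}\omb$) are false as stated. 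The paper keeps the discrepancy explicit, writing $\omv=\omb-\beta\theta+\ddc\vpb$ as in \eqref{eq:norm}, and absorbs the extra terms using $\theta\le M\omb$ and $\beta\le 1/2M$: at the maximum one gets $(\omb-(\beta+\ep)\theta)^n\le 2^n\omb^n$, and in the Laplacian estimate the term $\beta\,\tr_{\omv}\theta$ is dominated by $\tfrac12\tr_{\omv}\omb$. These corrections are routine but necessary; the alternative of building a model metric directly in $c_1(K_X+(1-\beta)D)$ would force you to re-verify the uniform curvature bound, which Theorem~\ref{curv} does not give you for free.

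The second defect is that you flag the non-compactness of $X\smallsetminus D$ but do not resolve it, and your proposed resolution for the $L^\infty$ bound --- extrema ``attained (or approached)'' --- is insufficient: if the supremum of $\vpb$ is merely approached along a sequence tending to $D$, the Hessian sign argument yields nothing. The paper's fix, carried out both in Proposition~\ref{prop:est0} and in the Laplacian estimate, is the perturbation trick: replace $\vpb$ (resp.\ $H=\log\tr_{\omv}\omb-A\vpb$) by $\vpb+\ep\log|s|^2$ (resp.\ $H+\ep\log|s|^2$), which tends to $-\infty$ along $D$ because $\vpb$ and $\tr_{\omv}\omb$ are qualitatively bounded for each fixed $\beta$ (both metrics having cone singularities of the same angle); the perturbed function then attains an interior maximum $x_\ep$, one applies the maximum principle there, and lets $\ep\to 0$ at the very end. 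No barrier construction or $\beta$-uniform boundary asymptotics are needed --- only the qualitative boundedness, since $\ep$ is sent to zero at fixed $x$. With these two repairs your argument becomes precisely the paper's proof.
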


The proof of Theorem B is independent of Theorem A; one cannot recover from it the global weak convergence of $\omb$ to $\om_0$, but we could use it to prove the smooth convergence on the compact subsets of $X\smallsetminus D$ from the weak convergence. \\

In \cite[Definition 5.5]{BBGZ}, the authors introduce the notion of convergence in energy (or with respect to the strong topology), for closed positive $(1,1)$-currents with finite energy. It is well-known that the Monge-Ampère operator is discontinuous with respect to the weak topology, and this topology is particularly useful to circumvent that issue. More precisely, the strong topology is the coarsest topology that makes the energy functional continuous. An important observation is that the notion of convergence in energy is defined at the level of the potentials so it only extends to currents living in the same cohomology class, which is precisely not the case of the currents $\omb$. However, Theorem B guarantees that the potentials $\vpb$ of $\omb$ are $\om$-psh for $\beta$ small enough, where $\om \in c_1(K_X+D)$ is a reference Kähler metric, cf \S \ref{sec:energy}. So we can make sense of convergence in energy for $\omb$ (or equivalently $\vpb$), and better we can prove that it actually happens:

\begin{cor}
The currents $\omb$ converge in energy toward $\om_0$.
\end{cor}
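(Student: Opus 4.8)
The plan is to deduce the statement from the characterization of convergence in energy recalled in §\ref{sec:energy}: for $\om$--psh potentials lying in the finite--energy class $\mathcal E^1(X,\om)$ attached to the fixed Kähler form $\om\in c_1(K_X+D)$, a family converges in the strong topology of \cite{BBGZ} if and only if it converges weakly in $L^1(X)$ \emph{and} its Monge--Ampère energies converge. By Theorem B the currents $\omb+\beta[D]$ are cohomologous to $\om$ and are of the form $\om+\ddc\vpb$ for $\om$--psh potentials $\vpb$; these, together with the cusp potential $\vp_0$ defined by $\om_0=\om+\ddc\vp_0$, are the objects whose energies we must compare. The weak convergence of the currents furnished by Theorem A gives $\ddc\vpb\to\ddc\vp_0$, and the $\mathscr C^{\infty}_{\rm loc}(X\smallsetminus D)$ convergence fixes the additive normalising constants, so that $\vpb\to\vp_0$ in $L^1(X)$. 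It therefore remains to prove that $E(\vpb)\to E(\vp_0)$, where
\[E(\vp)=\frac{1}{n+1}\sum_{j=0}^{n}\int_X \vp\,(\om+\ddc\vp)^{j}\wedge\om^{\,n-j}.\]
Since $E$ is upper semicontinuous along $L^1$--convergent sequences, the bound $\limsup_{\beta\to0}E(\vpb)\le E(\vp_0)$ comes for free; the substance is the matching lower bound.

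I would extract both bounds at once from the cocycle identity
\[E(\vpb)-E(\vp_0)=\frac{1}{n+1}\sum_{j=0}^{n}\int_X(\vpb-\vp_0)\,(\om+\ddc\vpb)^{j}\wedge(\om+\ddc\vp_0)^{\,n-j},\]
showing that each of these $n+1$ integrals tends to $0$. Fix $\delta>0$ and split $X=(X\smallsetminus U_\delta)\cup U_\delta$ with $U_\delta=\{|s|<\delta\}$. On the compact set $X\smallsetminus U_\delta$ the functions $\vpb-\vp_0$ converge to $0$ uniformly and, again by the $\mathscr C^{\infty}_{\rm loc}$ convergence, the mixed Monge--Ampère measures $(\om+\ddc\vpb)^{j}\wedge(\om+\ddc\vp_0)^{n-j}$ converge to $(\om+\ddc\vp_0)^{n}$; hence the integral over $X\smallsetminus U_\delta$ tends to $0$ as $\beta\to0$, for every fixed $\delta$.

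The heart of the matter is a uniform control of the remaining integrals over $U_\delta$: I must produce a modulus $\eta(\delta)\to0$, \emph{independent of} $\beta$, bounding $\big|\int_{U_\delta}(\vpb-\vp_0)\,(\om+\ddc\vpb)^{j}\wedge(\om+\ddc\vp_0)^{n-j}\big|$. This is exactly where Theorem B is indispensable. Using the two--sided estimate $C^{-1}\ombm\le\omb\le C\,\ombm$ I would dominate the mixed measures by wedge powers of the explicit model $\ombm$ and of the cusp metric $\om_0$, and dominate $|\vpb-\vp_0|$ by $C(1+|\vp_0|)$, recalling that along the $z_1$--direction $\ombm$ is $\ddc$ of the explicit potential $-\log\bigl(1-|z_1|^{2\beta}\bigr)$, whose $\log\beta$ is absorbed by the global normalisation of $\vpb$. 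The resulting one--variable integrals over $\{|z_1|<\delta\}$ are computed after the substitution $v=|z_1|^{2\beta}$; the powers of $\beta$ in the model volume exactly compensate the factor $(1-\delta^{2\beta})^{-1}\sim(2\beta\log\tfrac1\delta)^{-1}$, so that the mass carried near $D$ is of order $1/\log\tfrac1\delta$, uniformly in $\beta$. Granting this, letting first $\beta\to0$ and then $\delta\to0$ kills every term in the cocycle identity and yields $E(\vpb)\to E(\vp_0)$, hence the convergence in energy.

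The main obstacle is precisely this last uniform near--divisor estimate: unlike the global stability argument behind Theorem A, it cannot be run cohomologically, because the classes $c_1(K_X+(1-\beta)D)$ genuinely vary, and one is forced to exploit the precise shape of the model metric. A secondary subtlety to treat carefully is that the non-pluripolar mass of $\om+\ddc\vpb=\omb+\beta[D]$ equals $(K_X+(1-\beta)D)^{n}$, which differs from $\int_X\om^{n}=(K_X+D)^{n}$ by $O(\beta)$; I would check that this vanishing mass defect does not affect the limit of the energies, for instance by comparing $\vpb$ with its full--mass envelope in the class $c_1(K_X+D)$ before passing to the limit.
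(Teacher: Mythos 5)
Your strategy is essentially sound and it shares all of its key ingredients with the paper's proof --- Theorem B to see that $\vpb$ is $\om$-psh and to dominate the Monge--Amp\`ere measures, the pointwise bound $|\vpb-\vp_0|\le C(1-\vp_0)$, the smooth convergence on $X\smallsetminus D$, and the reduction of strong convergence to weak convergence plus convergence of the energies --- but you organize the limit $E(\vpb)\to E(\vp_0)$ differently: through the cocycle identity and a two-region split requiring a uniform-in-$\beta$ modulus $\eta(\delta)$ near $D$, whereas the paper proves term-by-term convergence of $\int_X \vpb\,(\om+\ddc\vpb)^{k}\wedge\om^{n-k}$ by a single application of dominated convergence. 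What makes the paper's route shorter is that the dominating measure can be taken \emph{independent of} $\beta$: the elementary inequality of Lemma \ref{lem:elem}$(ii)$, $(1-t^{\beta})/\beta\le-\log t$, yields $\beta^{2}t^{\beta-1}(1-t^{\beta})^{-2}\le t^{-1}(\log t)^{-2}$ pointwise, i.e.\ $\ombm\le C\,\om_0$ uniformly in $\beta$, whence $|\vpb-\vp_0|\,(\om+\ddc\vpb)^{k}\wedge\om^{n-k}\le C(1-\vp_0)\,\om_0^{n}$ with a fixed integrable right-hand side. This collapses what you call the ``heart of the matter'' into the tautology $\int_{U_\delta}(1-\vp_0)\,\om_0^{n}\to0$ as $\delta\to0$. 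Your direct computation does work --- after $v=|z_1|^{2\beta}$ the model mass of $\{|s|<\delta\}$ is indeed $\sim(2\log\tfrac1\delta)^{-1}$ uniformly, and the apparent $\log(1/\beta)$ produced by writing $\log(-\log t)=\log(1/\beta)+\log(-\log v)$ cancels exactly where the measure concentrates --- but it is more laborious than necessary, and the uniform estimate you flag as the main obstacle is already available in the paper in one line.

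One genuine slip in your framing: $\om+\ddc\vpb$ equals $\omb+\beta\theta$ with $\theta$ the \emph{smooth} curvature form of the fixed metric on $\Ox(D)$, not $\omb+\beta[D]$; the current $\omb+\beta[D]$ has potential $\vpb+\beta\log|s|^2$, an unbounded, non-full-mass element of $\PSH(X,\om)$. With the bounded potential $\vpb$ (the one your estimates actually use, and the one the paper uses), the Bedford--Taylor measure $(\om+\ddc\vpb)^{n}$ carries full mass $\int_X\om^{n}$, so the ``mass defect'' $c_1(K_X+(1-\beta)D)^{n}$ versus $c_1(K_X+D)^{n}$ that worries you in your final paragraph never enters, and the envelope comparison you propose there is unnecessary. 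If instead you insisted on the potential of $\omb+\beta[D]$, the cocycle identity as you wrote it would no longer be justified without non-pluripolar machinery; keeping the smooth twist dissolves the issue entirely.
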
 

This result is relatively easily deduced from Theorems A and B; it is another instance of the \textit{strong} global convergence of the currents $\omb$ to $\om_0$.\\

In the last part of this paper, we focus on the (rough) asymptotic behavior of $\omb$ near $D$, when $\beta$ tends to $0$. More precisely, we fix a point $p\in D$, and we look at the small neighborhood (included in a coordinate chart) $U_{\beta}:=(0<|z_1|^2<e^{-1/\beta}) \cap (|z_i|^2< 1)$. Up to constants, $U_{\beta}$ corresponds to the (punctured) ball $B_p(\omb,1)$ of radius $1$ centered at $p$ for the metric $\omb$ (or better, its completion). The proper renormalization factor for this metric is $\beta^{-2}$ in this context, and it leads to the convergence (up to taking subsequences) toward a Ricci-flat cylindrical metric, i.e. a metric on $\CC^*\times \CC^{n-1}$ which pulls-back to a constant metric under the universal cover, cf \S \ref{cyl}. The precise result is the following:

\begin{thmc}
Let $(\beta_n)_{n\in \N}$ be a sequence of positive numbers converging to $0$. Then up to extracting a subsequence, there exists a cylindrical metric $\om_{\rm cyl}$ on $\CC^*\times \CC^{n-1}$ such that the metric spaces $(U_{\beta_n}, \beta_n^{-2}\om_{\beta_n})$ converge in pointed Gromov-Hausdorff topology to $(\CC^*\times \CC^{n-1},  \om_{\rm cyl})$ when $n$ tends to $+\infty$.
\end{thmc}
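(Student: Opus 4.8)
\emph{Setup and rescaling.} The plan is to rescale, pass to cylinder coordinates, and identify the limit through a compactness argument in which flatness comes out of the curvature bound. Set $\widetilde{\om}_\beta := \beta^{-2}\omb$. The basic identities are that rescaling does not change the Ricci form, so $\Ric\,\widetilde{\om}_\beta = \Ric\,\omb = -\omb = -\beta^2\,\widetilde{\om}_\beta$, and that the holomorphic bisectional curvature of $\widetilde{\om}_\beta$ is exactly $\beta^2$ times that of $\omb$. Thus, granting a uniform-in-$\beta$ bound on the bisectional curvature of $\omb$ (see below), the full curvature tensor of $\widetilde{\om}_\beta$ is $O(\beta^2)$ and tends to $0$. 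In parallel I would introduce the cylindrical coordinates $\zeta_1 := \log z_1 + \lambda/\beta$ (for a fixed interior scale $\lambda>1/2$) and $w_k := z_k/\beta$ for $k\ge 2$. A direct computation gives, on $U_\beta$, that the $z_1$-part of $\beta^{-2}\ombm$ equals $\tfrac{|z_1|^{2\beta}}{(1-|z_1|^{2\beta})^2}\, i\, d\zeta_1\wedge d\bar\zeta_1$, and since $|z_1|^{2\beta}=e^{2\beta\,\mathrm{Re}\,\zeta_1-2\lambda}\to e^{-2\lambda}$ locally uniformly, the rescaled model $\beta^{-2}\ombm$ converges in $\mathscr C^\infty_{\rm loc}$ to the explicit flat metric $\om_{\rm cyl}^0:=\tfrac{e^{-2\lambda}}{(1-e^{-2\lambda})^2}\,i\,d\zeta_1\wedge d\bar\zeta_1+\sum_{k\ge 2}i\,dw_k\wedge d\bar w_k$ on $\Cx$. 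The base point $x_\beta=(e^{-\lambda/\beta},0,\dots,0)$, i.e. $\zeta_1=0,\ w'=0$, is chosen precisely so that \emph{both} the cone point $z_1=0$ and the outer boundary $|z_1|=e^{-1/(2\beta)}$ recede to infinite $\widetilde{\om}_\beta$-distance; this is the point that makes the pointed limit the \emph{full} cylinder $\Cx$ rather than a half-cylinder or a ray-like tangent object (which is exactly what one would see from the tip $p$).

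\emph{Uniform geometry and compactness.} By Theorem B the metric $\widetilde{\om}_\beta$ is uniformly equivalent to $\beta^{-2}\ombm$, which has bounded geometry uniformly in $\beta$ near $x_\beta$; in particular $\widetilde{\om}_\beta$-balls are uniformly non-collapsed, i.e. their volume is bounded below. Combined with the curvature bound $|\mathrm{Rm}(\widetilde{\om}_\beta)|=O(\beta^2)$, Cheeger's lemma yields a uniform lower bound on the injectivity radius. I would then apply the pointed Cheeger--Gromov--Anderson compactness theorem: from any sequence $\beta_n\to 0$ one extracts a subsequence along which $(U_{\beta_n},\widetilde{\om}_{\beta_n},x_{\beta_n})$ converges in the pointed $\mathscr C^\infty$ Cheeger--Gromov topology (hence a fortiori in pointed Gromov--Hausdorff topology) to a complete pointed Kähler manifold $(M_\infty,\om_\infty,x_\infty)$, the complex structures converging along the way.

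\emph{Identification of the limit.} The curvature bound passes to the limit and gives $\mathrm{Rm}(\om_\infty)=0$, so $\om_\infty$ is flat (equivalently, Ricci-flatness follows from $\Ric\,\widetilde{\om}_{\beta}=-\beta^2\widetilde{\om}_\beta\to 0$). The uniform equivalence with the model, together with the explicit convergence $\beta^{-2}\ombm\to\om_{\rm cyl}^0$ on $\Cx$, identifies $(M_\infty,\om_\infty)$ as a pointed complex manifold with $\Cx$. Hence $\om_\infty$ is a complete flat Kähler metric on $\Cx$; pulling back to the universal cover $\CC^n$ (via $\zeta_1\mapsto e^{\zeta_1}$), a complete flat Kähler metric invariant under the deck transformation $\zeta_1\mapsto\zeta_1+2\pi i$ is necessarily translation-invariant, i.e. constant-coefficient. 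Thus $\om_\infty=:\om_{\rm cyl}$ is cylindrical in the sense of \S\ref{cyl}, proving Theorem C.

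\emph{Main obstacle.} The flatness of the limit is essentially free once the curvature scaling is in place, so the real content lies in two uniformity issues. First, one must upgrade Theorem B to a uniform-in-$\beta$ bound on the bisectional curvature of the genuine conic Kähler--Einstein metric $\omb$ (not merely of the model $\ombm$, for which Theorem \ref{curv} applies): this requires propagating the $\mathscr C^0$ estimate of Theorem B to higher order through interior Schauder/Evans--Krylov estimates adapted to the conic setting, with constants independent of $\beta$ as $\beta\to 0$. Second, one must control the geometry near the divisor—the non-collapsing and injectivity-radius lower bounds at the correct base-point scale—so that the Cheeger--Gromov machinery produces the complete cylinder $\Cx$ with a clean limit; it is here that the careful placement of $x_\beta$ between the receding tip and the receding outer boundary is essential. (The modulus of the limit cylinder depends on the interior scale $\lambda$, which is consistent with the statement asserting only the existence of a cylindrical limit up to a subsequence.)
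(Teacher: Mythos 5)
There is a genuine gap, and it sits exactly where you flagged it: your entire compactness mechanism (Cheeger--Gromov--Anderson from $|\mathrm{Rm}(\beta^{-2}\omb)|=O(\beta^2)$, plus Cheeger's lemma for the injectivity radius) requires a uniform-in-$\beta$ bound on the bisectional curvature of the \emph{actual} Kähler--Einstein metric $\omb$. The paper's Theorem \ref{curv} bounds the curvature of the model $\ombm$ only, and no curvature bound for $\omb$ itself is proved anywhere in the paper; nor does it follow from the fix you propose. Conic interior Schauder/Evans--Krylov theory controls the potential in cone-Hölder spaces, which does not bound the full curvature tensor near $D$, and uniformity of the constants as $\beta\to 0$ is precisely the degenerating regime --- this estimate is at least as hard as everything else in the paper, and the paper's proof is designed to \emph{bypass} it. The paper rescales first, via $\Psi_\beta(w)=(e^{-1/\beta}w_1,\beta w_2,\ldots,\beta w_n)$, observes that by Theorem B the pulled-back metrics $\ombb=\Psi_\beta^*(\beta^{-2}\omb)$ are uniformly equivalent to $\ome$ on every fixed compact $K\subset\Cx$, and then runs purely local, non-conic elliptic theory on local potentials: a uniform $L^\infty$ bound from the bounded Laplacian, the observation that $H_\beta=\log\bigl((\ddc\vpbb)^n e^{-\beta^2\vpbb}/dV\bigr)$ is pluriharmonic and bounded (hence bounded in every $\mathscr C^k$), then Evans--Krylov, Schauder bootstrap and Arzelà--Ascoli. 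This yields smooth subconvergence with only an $O(1)$ curvature bound on compacts; the limit is then merely Ricci-flat (since $\Ric\ombb=-\beta^2\ombb\to 0$), not flat, and the cylindrical structure is recovered by a different device: on the universal cover the volume ratio is $e^{H}$ with $H$ bounded pluriharmonic, hence constant, and a Liouville theorem for the complex Monge--Ampère equation (constant determinant plus uniform ellipticity forces a constant complex Hessian) gives $\pi^*\bar\om_0=g^*\ome$. Your curvature-decay route would render that Liouville step unnecessary --- but only granting the missing estimate, so as written the proof does not close.

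A secondary error: your identification step asserts that a complete flat Kähler metric on $\CC^n$ invariant under the deck transformation $\zeta_1\mapsto\zeta_1+2\pi i$ is necessarily constant-coefficient. This is false as stated: pulling back $\ome$ by the shear $F(\zeta_1,\zeta_2,\ldots,\zeta_n)=(\zeta_1,\zeta_2+e^{\zeta_1},\zeta_3,\ldots,\zeta_n)$, which commutes with the deck transformation, produces a complete, flat, invariant Kähler metric whose $g_{1\bar 1}$-coefficient $1+|e^{\zeta_1}|^2$ is non-constant. What rules this out is the quasi-isometry with the cylinder (boundedness of the coefficients), which you do have from Theorem B and the model computation --- but that boundedness must be invoked explicitly, and doing so in the Ricci-flat (rather than flat) setting is exactly the bounded-pluriharmonic-plus-Liouville argument of the paper's Step 3. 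Your Step 1 (rescaling of the model; your base point at $|z_1|^{2\beta}=e^{-2\lambda}$ matches the paper's $\Psi_\beta$-image of a fixed point) and your Step 4 (smooth to pointed Gromov--Hausdorff) do agree with the paper's.
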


We obtain this result by showing a stronger statement about smooth convergence on compact sets in $\CC^*\times \CC^{n-1}$ under a suitable embedding. The limit of our method, which is based on a priori estimates, is that it only provides relative compactness (and not a limit).
So far, we do not know whether the full family $(\beta^{-2}\omb)$ converges when $\beta\to 0$, as different subsequences could converge to different cylindrical metrics (although two cylindrical metrics have same Riemannian universal cover, they are in general not holomorphically isometric, cf \S \ref{cyl}). 
We suspect that this interesting question is difficult. \\

\noindent
\textbf{Acknowledgements.} I am grateful to Vincent Guedj who suggested this problem to me. Also, I would like to thank Song Sun for very insightful discussions about this paper.

\section{Weak convergence}
\subsection{A first observation}

First, we have to ensure that the family of (closed positive) currents $(\omb)_{0<\beta<\beta_0}$ is relatively compact for the weak topology. Before proving it, let us set up the notations. 

As $K_X+D$ is ample, there exists a Kähler metric $\om \in c_1(K_X+D)$. We pick a section $s$ of $\Ox(D)$ cutting out this hypersurface, then we fix a smooth hermitian metric $|\cdotp|$ on $\Ox(D)$ and we let $\theta$ be its curvature form. 

\begin{rema}
As no assumption is made on $D$, one cannot assume that $|\cdotp|$ can be chosen in such a way that $\theta$ is semipositive. Indeed, let 
 $C$ be a genus $g\ge 2$ curve, $X:=C\times C$ and let $\Delta$ be the diagonal of $X$. By adjunction, $(K_X+\Delta \cdotp \Delta) = (K_{\Delta})=2g-2$ while $(K_X \cdotp \Delta) = (p_1^*K_C + p_2^*K_C \cdotp \Delta) = 2(2g-2)$ so that $(\Delta^2)=2-2g<0$ hence $\Delta$ is not nef (so in particular its cohomology class does not contain any smooth non-negative form). However, an application of Nakai-Moishezon ampleness criterion for surfaces shows that $K_X+\Delta$ is ample, so that $(X, \Delta)$ provides the example we were looking for. \\
\end{rema}

Back to our general setting, we first observe that up to choosing a smaller $\beta_0$, one can always assume that $\om-\beta \theta$ is a Kähler metric. Next, we introduce $h$ a (twisted) Ricci potential of $\om$, i.e. a smooth function satisfying $\Ric \om = -\om + \theta + \ddc h$. We also introduce, for $\beta \in [0, \beta_0)$, the normalized potential $\vpb$ of $\omb$, ie 
\[\omb=\om - \beta \theta + \ddc \vpb\]
and $\sup \vpb =0$. This normalization makes $(\vpb)$ into a precompact family for free; the counterpart is that we lose control on the normalization constant in the Monge-Ampère equation satisfied by $\vpb$, which reads
\begin{equation}
\label{eq:ma}
(\om-\beta \theta + \ddc \vpb)^n = \frac{e^{\vpb+h+\Cb} \om^n}{|s|^{2(1-\beta)}}
\end{equation}
for some $\Cb\in \R$. Actually, it is easy to get an upper bound on $\Cb$:

\begin{lemm}
\label{lem:dom}
There exists $C>0$ independent of $\beta$ such that $C_{\beta}\le C$. 
\end{lemm}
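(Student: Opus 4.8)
The plan is to make $\Cb$ explicit by integrating the Monge--Ampère equation \eqref{eq:ma} over $X$. Since the conic behaviour of $\vpb$ near $D$ is modelled on $\sb/\beta^2$, the potential $\vpb$ is bounded, so $\omb$ carries no mass on the pluripolar set $D$ and integrating the left-hand side returns the purely cohomological volume
\[\int_X \omb^n=\int_X(\om-\beta\theta)^n=:\Vb ,\]
which is a polynomial in $\beta$ and hence bounded above uniformly for $\beta\in[0,\beta_0)$. Integrating the right-hand side then yields
\[e^{\Cb}=\frac{\Vb}{\displaystyle\int_X e^{\vpb+h}\,\om^n/\sbb}.\]
Consequently the assertion $\Cb\le C$ is equivalent to a uniform \emph{lower} bound on the denominator, and it is there that the (mild) work lies.

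For that lower bound I would first rescale the Hermitian metric on $\Ox(D)$ by a positive constant so that $|s|\le1$ on $X$; this leaves $\theta$ unchanged and forces $1/\sbb\ge1$ everywhere. As $h$ is smooth on the compact manifold $X$, hence bounded below, it suffices to bound $\int_X e^{\vpb}\,\om^n$ from below. I would apply Jensen's inequality against the probability measure $\om^n/V$, with $V:=\int_X\om^n$:
\[\int_X e^{\vpb}\,\frac{\om^n}{V}\ge\exp\!\Big(\tfrac1V\int_X\vpb\,\om^n\Big),\]
thereby reducing the whole statement to a uniform upper bound on $\int_X(-\vpb)\,\om^n$.

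This last estimate is the only non-elementary ingredient, and it comes from the compactness of normalised quasi-psh functions. Choosing $A>0$ with $-A\om\le\theta\le A\om$ and setting $\om':=(1+\beta_0A)\om$, one checks that $\om'+\ddc\vpb=(\beta_0A\,\om+\beta\theta)+\omb\ge0$, so every $\vpb$ is $\om'$-psh for the \emph{fixed} Kähler form $\om'$. Since $\sup_X\vpb=0$, the standard $L^1$-compactness of the family $\{u\ \om'\text{-psh}:\sup_X u=0\}$ supplies a constant $C_0$, independent of $\beta$, with $\int_X(-\vpb)\,\om^n\le C_0$; combining the three displays gives the desired uniform bound on $e^{\Cb}$. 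The main point to watch is the justification that the mass of $\omb^n$ is cohomological (no mass on $D$), which is where boundedness of $\vpb$ is used; I expect this, together with the $L^1$-compactness input, to be the only step requiring care. Alternatively — and in the spirit of the maximum principle emphasised above — one may evaluate \eqref{eq:ma} at a point where $\vpb$ attains its maximum $0$: there $\ddc\vpb\le0$ forces $e^{h+\Cb}\le\sbb\,(\om-\beta\theta)^n/\om^n\le C$, the maximum necessarily lying off $D$ since the density blows up along $D$.
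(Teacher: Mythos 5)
Your proof is correct and follows essentially the same route as the paper: integrate the Monge--Amp\`ere equation \eqref{eq:ma}, apply Jensen's inequality against $\om^n/V$, and conclude via the uniform $L^1$ bound for sup-normalized $A\om$-psh functions (the paper takes $A$ with $\om-\beta\theta\le A\om$, exactly your $\om'$). The only cosmetic difference is that you discard the weight using $|s|\le 1$ so that $|s|^{-2(1-\beta)}\ge 1$, whereas the paper keeps $-(1-\beta)\log|s|^2$ inside the Jensen exponent and uses its $L^1(\om^n)$ integrability; your closing maximum-principle alternative is fine in spirit (and matches the $\vpb+\ep\log|s|^2$ trick the paper uses later), though as stated the claim that the maximum avoids $D$ would need that perturbation to be made rigorous.
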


\begin{proof}
Let $V_{\beta}:=\int_X (\om- \beta \theta)^n = c_1(K_X+(1-\beta)D)^n$. Integrating equation \eqref{eq:ma} and applying Jensen's inequality, we get 
\begin{eqnarray*}
\frac{\Vb}{V_0} &=& e^{\Cb} \int_X \frac{e^{\vpb+h} }{|s|^{2(1-\beta)}} \, \om^n/V_0\\
& \ge & e^{\Cb} \cdotp \exp\left(\int_X \left(\vpb+h-(1-\beta) \log |s|^2\right) \, \om^n /V_0 \right)
\end{eqnarray*}
and therefore, there exists $C$ independent of $\beta $ such that $C_{\beta}\le C+\int_X (-\vpb) \om^n/V_0$.
Now, $\vpb$ is $A\om$-psh for $A$ big enough independent of $\beta$, and $\max \vpb =0$, so by basic compactness properties of quasi-psh function, the $L^1$ norm of $\vpb$ is under control, which enables us to conclude the proof of the lemma.
\end{proof}

\subsection{The variational argument}

We know that the family of potentials $(\vpb)$ is precompact, so we can extract weak limits when $\beta \to 0$, and we want to see that all possible limits are the same, equal to $\vp_0$. To prove it, we will use variational arguments inspired by \cite{BG}. Let us recall the setup. 
If $\vp \in \PSH(X,\om-\beta \theta)$ we set 
\[\Gb(\vp)= \Eb(\vp)+\Lb(\vp) \]
where 
\[\Eb(\vp) = \frac{1}{(n+1)\Vb} \sum_{k=0}^n \int_X \vp \,  (\om-\beta \theta)^k \wedge (\om-\beta \theta+\ddc \vp)^{n-k}\]
is the pluricomplex energy attached to the Kähler metric $\om-\beta \theta$ (and whose derivative is the Monge-Ampère operator with respect to this metric) and 
\[\Lb(\vp) = -\log \int_X e^{\vp} \cdot \frac{e^{h+\Cb} \om^n}{|s|^{2(1-\beta)}}\]
Then we know from \cite[Theorem 3.2]{BG} that $\vpb$ is the unique (normalized) maximizer of $\Gb$, for every $\beta \in [0, \beta_0)$. The following lemma expresses the (semi-) continuity properties needed to conclude that $\vpb \to \vp_0$.

\begin{lemm}
\label{lem:sc}
If $\pb \in \PSH(X,\om-\beta \theta)$ is a family converging to $\psi \in PSH(X,\om)$ in $L^1$ topology, then \[\varlimsup_{\beta \to 0} \Gb(\pb) \le \mathcal G_0(\psi)\]
Moreover, any $\vp \in \cap_{0\le \beta <\beta_0} \PSH(X,\om-\beta \theta)\cap \{\mathcal G_0 >- \infty\}$ satisfies: 
\[\lim_{\beta \to 0} \Gb(\vp) = \mathcal G_0(\vp)\]
\end{lemm}

\begin{proof}
Let us begin with the first statement. By Fatou's lemma, we get $\varlimsup \Lb(\pb) \le \mathcal L_0(\psi)$. Moreover, \cite[Lemma 3.6]{BG} gives us precisely the corresponding inequality for the energy: $\varlimsup \Eb(\pb) \le \mathcal E_0(\psi)$. Therefore, an application of the standard inequality $\varlimsup (f+g) \le \varlimsup f + \varlimsup g$ proves our claim. \\

Let us get to the second part. Of course, there is no restriction in assuming that $\vp$ is sup-normalized as the $\mathcal G$ functionals are translation invariant. Now, thanks to Lemma \ref{lem:dom}, we have the following inequality
\[\frac{e^{\vp+h+\Cb}}{|s|^{2(1-\beta)}} \le  \frac{Ce^{\vp+h+C_0}}{|s|^{2}} \]
for some constant $C$. As $\mathcal L_0(\vp)>-\infty$, Lebesgue's domination theorem shows that $\Lb(\vp) \to \mathcal L_0(\vp)$. The energy term can be dealt with in the following way: we choose $C>0$ such that $\om-\beta \theta \le (1+C) \om$. Then, we have, for each $k\in [0, n]$:
\begin{eqnarray*}
(\om-\beta \theta + \ddc \vp)^{k} \wedge (\om-\beta \theta)^{n-k}& \le & (1+C)^{n-k} \left (C \om+(\om+\ddc \vp)\right)^k \wedge \om^{n-k}\\
& \le & (1+C)^{n-k} \sum_{j=0}^k C^{k-j}C^{j}_k(\om + \ddc \vp)^{j} \wedge \om ^{n-j} 
\end{eqnarray*}
and therefore, as $\vp \le 0$, we obtain for all $k\in [0,n]$:
\[0 \le (-\vp) (\om-\beta \theta + \ddc \vp)^{k} \wedge (\om-\beta \theta)^{n-k} \le C \, \sum_{j=0}^n (-\vp) (\om + \ddc \vp)^{j} \wedge \om ^{n-j}  \]
for some $C>0$ independent of $\beta$. Now, as $\mathcal G_0(\vp) $ is finite, then so is $\mathcal E_0(\vp)$, and therefore Lebesgue's domination theorem guarantees that $\Eb(\vp)$ converges to $\mathcal E_0(\vp)$.
\end{proof}

We are now able to prove the first part of the Main Theorem, i.e. the weak convergence of $\omb$ to $\om_0$. As we observed at the beginning of this section, it is sufficient to see that every cluster value of $\vpb$ equals $\vp_0$. So let us consider $\psi$ such a cluster value. We need to see that $\psi$ maximizes $\mathcal G_0$ on $\PSH(X,\om)$.
By the first part of Lemma \ref{lem:sc}, we find:
\begin{equation}
\label{eq1}
\mathcal G_0(\psi) \ge  \varlimsup_{\beta \to 0} \Gb(\vpb) \ge \varlimsup_{\beta \to 0} \Gb(\vp_0)
\end{equation}
the second inequality being derived from the maximizing property of $\vpb$.

The crucial observation now (that would fail in the singular setting for instance) is that the (normalized) maximizer $\vp_0$ of $\mathcal G_0$ is not only $\om$-psh but also $(1-\delta)\om$-psh for some sufficiently small $\delta$. Indeed, we know that $\om_0$ is a current which is a Kähler metric outside $D$ and has cusp singularities along $D$, hence it is a Kähler current, i.e. there exists $\delta>0$ such that $\om+\ddc \vp_0 \ge \delta \om$. In particular, up to choosing a smaller $\beta_0$, the potential $\vp_0$ belongs to the intersection  $\cap_{0\le \beta <\beta_0} \PSH(X,\om-\beta \theta)\cap \{\mathcal G_0 >- \infty\}$. So we can apply the second part of Lemma \ref{lem:sc} to $\vp_0$, and get
\[\varlimsup_{\beta \to 0} \Gb(\vp_0) = \mathcal G_0(\vp_0)\]
Combined with \eqref{eq1}, we find
\[\mathcal G_0(\psi) \ge \mathcal G_0(\vp_0)\]
therefore $\psi$ maximizes $\mathcal G_0$, so it equals $\vp_0$ modulo up to an additive constant. As these two functions are identically normalized, they are equal, and our result is proved.

\begin{rema}
We could have used an alternative simpler (yet less general) argument to show the convergence of $(\vpb)$ to some weak KE metric, based on idea of Tsuji \cite{Tsuji10} expanded further by Song-Tian \cite{SongTian} in particular in \S 4.3. Indeed, an application of the maximum principle (or comparison principle) shows that $\vpb+\beta \log |s|^2$ is increasing when $\beta$ decreases to $0$, and also bounded above. Therefore $(\sup_{\beta\downarrow 0} \vpb)^*$ provides a candidate for a weak KE metric (we wouldn't know if it were $\vp_0$ because of the lack of information near $D$). 

\end{rema}

\section{Smooth convergence on $X\smallsetminus D$}
From now on, we know that $\vpb$ converges to $\vp_0$ for the $L^1$ topology. So all we are left to prove is that the family $(\vpb)$ is precompact for the $\mathscr C_{\rm loc}^{\infty}(X\smallsetminus D)$ topology. Using Ascoli theorem, this amounts to establishing $\mathscr C^k_{\rm loc}(X\smallsetminus D)$ estimates for all $k$, but thanks to Evans-Krylov theory and Schauder interior estimates (the so-called bootstrapping for elliptic PDE's) it is enough to have local $L^{\infty}$ and Laplacian estimates on $X\smallsetminus D$. 

\subsection{The $L^{\infty}_{\rm loc}$ estimate}

\begin{lemm}
\label{est0}
There exists $C>0$ independent of $\beta$ such that
\[\vpb \ge - \log \log^2 |s|^2 - C\]
\end{lemm}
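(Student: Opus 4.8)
The goal is a uniform lower bound $\vpb \ge -\log\log^2|s|^2 - C$, and the natural tool here—consistent with the paper's stated emphasis on the maximum principle—is to compare $\vpb$ against an explicit barrier built from the cusp model. The plan is to set $\rho := \log\log^2|s|^2$ (more precisely $\log(\log^2|s|^2)$, well-defined and positive where $|s|^2 < 1$, after rescaling $|\cdot|$ so that $|s|^2 \le e^{-1}$ on $X$), and to study the candidate lower barrier $\vpb + \rho$. The heuristic is that $\om_0$ has a potential behaving like $-\rho$ near $D$, so $\vpb + \rho$ should stay bounded below uniformly in $\beta$; equivalently $\vpb$ cannot be more singular than the cusp potential. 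First I would record the key differential inequality satisfied by $\rho$: a direct computation gives $\ddc \rho = \frac{-2}{\log|s|^2}\,\theta + \frac{2\,i\,\d\log|s|^2 \wedge \b\d \log|s|^2}{\log^2|s|^2} \ge \frac{-2}{\log|s|^2}\,\theta \ge -\ep\,\om$ for $|s|$ small, since the second term is a nonnegative $(1,1)$-form and $1/|\log|s|^2|\to 0$ near $D$. Hence $\rho$ is very nearly $(\om-\beta\theta)$-psh up to a term controlled uniformly in $\beta$.

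The core step is the maximum principle applied to $\vpb + \rho$. Consider $u_\beta := \vpb + \rho$ on the region where $|s|^2$ is small; near $D$ one has $\rho \to +\infty$, so $u_\beta$ cannot attain its infimum on a neighborhood of $D$ and any minimum occurs in the interior (on a fixed compact subset away from $D$, where $\vpb$ is already controlled via the global $L^\infty$-type bounds coming from Lemma \ref{lem:dom} and sup-normalization). At an interior minimum point $x_0$ of $u_\beta$, I would feed the Monge–Ampère equation \eqref{eq:ma} into the inequality. Writing $\omb = \om-\beta\theta + \ddc\vpb$, the equation reads $\omb^n = e^{\vpb + h + \Cb}\,\om^n/|s|^{2(1-\beta)}$; taking logarithms and using $\trom \omb \ge n\,(\omb^n/\om^n)^{1/n}$ together with $\ddc u_\beta(x_0) \ge 0$ converts the minimum condition into a pointwise lower bound for $\vpb(x_0)$. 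The upshot I expect is that at $x_0$ the singular factor $|s|^{-2(1-\beta)}$ is forced to balance against $e^{\rho}= \log^2|s|^2$, and the uniform upper bound $\Cb \le C$ from Lemma \ref{lem:dom} keeps all constants independent of $\beta$. This yields $u_\beta(x_0) \ge -C$, hence $u_\beta \ge -C$ everywhere, which is the claim.

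The main obstacle is handling the boundary/degeneracy carefully: $\vpb$ is only a bounded potential of a conic metric, genuinely singular along $D$, so one must justify that the maximum principle applies despite $\vpb$ not being smooth up to $D$ and despite $\rho$ blowing up. The clean way to circumvent this is a regularization/exhaustion argument—work on $\{|s|^2 > \eta\}$, add a small multiple $\tau\rho$ or a penalization term that forces the minimum into the interior, apply the smooth maximum principle there (legitimate since $\omb$ is smooth off $D$ by the conic regularity theory of \cite{CGP, GP, JMR}), and then let $\eta, \tau \to 0$. A secondary technical point is ensuring the quasi-psh correction from $\ddc\rho \ge -\ep\om$ is absorbed uniformly in $\beta$; this is where shrinking $\beta_0$ so that $\om - \beta\theta$ stays Kähler (as arranged in the setup) is used. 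I would expect the estimate to be essentially sharp, matching the cusp singularity $-\log\log^2|s|^2$, which is why the statement is phrased with exactly this barrier rather than a cruder bound.
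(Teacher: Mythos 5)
Your overall strategy is the paper's: compare $\vpb$ with the barrier $\rho:=\log\log^2|s|^2$, locate an interior minimum of $\vpb+\rho$ (which exists because $\rho\to+\infty$ near $D$ while the conic potential $\vpb$ is qualitatively bounded for each fixed $\beta$), and feed the Monge--Ampère equation \eqref{eq:ma} together with the uniform bound $\Cb\le C$ of Lemma \ref{lem:dom} into the minimum condition. However, your key differential inequality contains a sign error that destroys the mechanism. Writing $v=-\log|s|^2$, one has $\ddc\rho = 2\,\ddc\log v = \frac{2\theta}{v} - \frac{2\, d\log|s|^2\wedge d^c\log|s|^2}{v^2}$: the gradient term enters with a \emph{minus} sign, and it is exactly (minus) the unbounded cusp form, behaving like $\frac{i\,dz_1\wedge d\bar z_1}{|z_1|^2\log^2|z_1|^2}$ near $D$. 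So $\rho$ is \emph{not} quasi-psh, and $\ddc\rho\ge -\ep\om$ is false near $D$. This is consequential: if at the minimum point $x_0$ you only retain $\ddc\vpb(x_0)\ge-\ddc\rho(x_0)\ge -\ep\,\om(x_0)$, then $\omb(x_0)$ is merely comparable to $\om(x_0)$, and \eqref{eq:ma} yields only $\vpb(x_0)\ge (1-\beta)\log|s|^2(x_0)-C$, whence $(\vpb+\rho)(x_0)\ge \log|s|^2(x_0)+\log\log^2|s|^2(x_0)-C$, which is unbounded below as $x_0$ approaches $D$: the balance against $e^{\rho}=\log^2|s|^2$ that you anticipate never materializes. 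The balance comes precisely from the term you discarded: at the minimum, $\ddc\vpb\ge-\ddc\rho$ gives $\omb(x_0)\ge(\om_P-\beta\theta)(x_0)$, where $\om_P:=\om-\ddc\rho$ is the Poincaré-type metric with $\om_P^n\sim \frac{\om^n}{|s|^2\log^2|s|^2}$; it is this extra factor that cancels the singular density $|s|^{-2(1-\beta)}$ (using $|s|^{-2\beta}\ge 1$) and produces $e^{\vpb(x_0)}\ge c\,e^{-\rho(x_0)}$. The paper encodes all of this by rewriting \eqref{eq:ma} as $(\om_P-\beta\theta+\ddc\pb)^n=e^{\pb+\Fb}\om_P^n$ with $\pb=\vpb+\rho$, where $\Fb$ is bounded above uniformly in $\beta$ (Lemma \ref{lem:dom} plus quasi-isometry of $\om_P$ with the cusp model) and $(\om_P-\beta\theta)^n/\om_P^n$ is bounded below for $\beta$ small; evaluating at the minimum of $\pb$ then gives $\pb\ge-C$ directly.

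Two smaller points. First, your exhaustion/penalization on $\{|s|^2>\eta\}$ is unnecessary: since $\pb$ is smooth on $X\smallsetminus D$, bounded below, and tends to $+\infty$ along $D$, it attains its minimum at an interior point and the smooth maximum principle applies outright --- this is all the paper uses; moreover there is no need (and no a priori way, since uniform $L^\infty$ control of $\vpb$ is exactly what is being proved) to confine the minimum to a fixed compact set independent of $\beta$ --- the pointwise estimate is uniform wherever the minimum occurs. Second, the inequality $\trom \omb\ge n\,(\omb^n/\om^n)^{1/n}$ is not needed and, measured against $\om$, loses exactly the cusp direction; a direct comparison of volume forms at the minimum point is what closes the argument.
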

\noindent
Remember that $\sup \vpb =0$, so that the inequality above yields the expected $L^{\infty}_{\rm loc}$ estimate.

\begin{proof}
Let us start with the Monge-Ampère equation \eqref{eq:ma} satisfied by $\vpb$:
\[(\om-\beta \theta + \ddc \vpb)^n = \frac{e^{\vpb+h+\Cb} \om^n}{|s|^{2(1-\beta)}}\]
We will rewrite the equation in terms of the cusp/Poincaré metric $\om_P:=\om - \log \log^2 |s|^2$. Setting $\pb:=\vpb+\log\log^2 |s|^2$ and $\Fb=h+\Cb+\beta \log |s|^2+\log \left( \frac{\om^n}{|s|^2 \log^2 |s|^2 \om_P^n}\right) $, the equation above becomes:
\begin{equation}
\label{eq:ma2}
(\om_P-\beta \theta + \ddc \pb)^n = e^{\pb+\Fb}\om_P^n
\end{equation}

Now, the function $\pb$ is smooth on $X\smallsetminus D$, bounded from below and goes to $+\infty$ near $D$. Therefore it achieves its minimum on $X\smallsetminus D$. At this point, the Hessian of $\pb$ is non-negative. Therefore, we have
\[\inf_{X\smallsetminus D} \pb \ge - \sup_{X\smallsetminus D} \Fb + \inf_{X\smallsetminus D} \log \left(\frac{(\om_P-\beta \theta)^n}{\om_P^n}\right)\]
By Lemma \ref{lem:dom}, $\sup \Fb$ is controlled independently of $\beta$, as is the infimum of the second term as long as $\beta$ is small enough. Therefore $\pb \ge -C$ on $X\smallsetminus D$ for some uniform $C$, from which we deduce the expected inequality.
\end{proof}

\subsection{The Laplacian estimate}
\begin{prop}
\label{lap}
There exist constants $A,C>0$ independent of $\beta$ such that
\[\omb \le   \frac{ C(-\log |s|^2)^A }{|s|^2} \, \om \]
\end{prop}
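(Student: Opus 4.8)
The plan is to apply the Chern-Lu inequality (i.e. the parabolic/elliptic Schwarz lemma) to the identity map between $(X\smallsetminus D,\omb)$ and a suitable reference metric, and then to control the resulting quantity via the maximum principle, exactly in the spirit of the proof of Lemma \ref{est0}. The natural reference metric is the cusp/Poincaré metric $\om_P = \om - \log\log^2|s|^2$, since by Lemma \ref{est0} the potential $\pb = \vpb + \log\log^2|s|^2$ satisfies the clean Monge-Ampère equation \eqref{eq:ma2} with a two-sided bound $\pb \ge -C$. The goal inequality $\omb \le \frac{C(-\log|s|^2)^A}{|s|^2}\om$ is, up to comparing $\om$ with $\om_P$ near $D$, essentially the statement that $\trom_{\om_P}\omb$ grows at most polynomially in $-\log|s|^2$; so it suffices to bound $\tr_{\om_P}\omb$ by such a quantity.

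First I would write down the Chern-Lu inequality for $\trom_{\om_P}\omb$ (equivalently, work with $u := \tr_{\omb}\om_P$ and use the standard Laplacian comparison). The key structural input is that the bisectional curvature of the reference metric $\om_P$ is \emph{bounded below} (this is classical for the Poincaré cusp metric), while the Ricci curvature of $\omb$ is pinned down by the twisted Kähler-Einstein equation $\Ric\omb = -\omb + (1-\beta)[D]$, so that on $X\smallsetminus D$ one has $\Ric\omb = -\omb$. Plugging these into Chern-Lu yields a differential inequality of the form $\Delta_{\omb}\log\tr_{\om_P}\omb \ge -\tr_{\om_P}\omb - C_1$, where $C_1$ is controlled by the curvature bound of $\om_P$ and is \emph{uniform in $\beta$} — this uniformity is precisely what the bounded-geometry nature of the cusp metric buys us.

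Next I would run the maximum principle on the auxiliary function $H := \log\tr_{\om_P}\omb - A'\pb$ for a suitably large constant $A'$, following the classical Aubin-Yau scheme. Applying $\Delta_{\omb}$ and using \eqref{eq:ma2} to compute $\Delta_{\omb}\pb = n - \tr_{\omb}(\om_P - \beta\theta)$, the term $-A'\Delta_{\omb}\pb$ produces a factor proportional to $\tr_{\omb}\om_P$ that, combined with the geometric/AM-GM inequality $\tr_{\om_P}\omb \le \frac{1}{(n-1)!}(\tr_{\omb}\om_P)^{n-1}\cdot\frac{\omb^n}{\om_P^n}$ and the explicit density $e^{\pb+\Fb}$ from \eqref{eq:ma2}, lets us absorb the bad $-\tr_{\om_P}\omb$ term and force a contradiction unless $\tr_{\om_P}\omb$ is bounded by $C e^{A'\pb}$ at the maximum. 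Since $\pb$ is bounded below but blows up like $\log(-\log|s|^2)$ near $D$, and $\om_P$ itself contributes the cusp factor, unwinding $\tr_{\om_P}\omb$ back to $\trom_\om\omb$ converts the $e^{A'\pb}$ control into the polynomial-in-$(-\log|s|^2)$ bound stated, with the exponent $A$ coming from $A'$.

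The main obstacle is twofold and both issues are non-compactness phenomena. First, $H$ is defined on the \emph{open} manifold $X\smallsetminus D$ and need not attain its maximum there, so I must verify that $H$ does not escape to $+\infty$ near $D$; this requires checking the precise boundary asymptotics of $\tr_{\om_P}\omb$ against the $e^{A'\pb}$ weight, using that $\omb$ is comparable to the cone model near $D$ — alternatively one regularizes by subtracting a barrier like $\ep\log\log^2|s|^2$ that tends to $+\infty$ on $D$, forcing the maximum to stay in a compact subset, and then letting $\ep\to 0$. Second, and more delicately, every constant entering the argument must be tracked to be \emph{independent of $\beta$}: the uniform lower bound on the bisectional curvature of $\om_P$, the uniform bound $\Cb \le C$ from Lemma \ref{lem:dom}, and the uniform lower bound $\pb \ge -C$ from Lemma \ref{est0} all feed into $C_1$ and into the final constant, and the Monge-Ampère density must be bounded uniformly in $\beta$ as $\beta\to 0$. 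Establishing this $\beta$-uniformity cleanly is where the real work lies.
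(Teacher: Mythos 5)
Your plan is, in architecture, exactly the paper's proof: rewrite the Monge--Ampère equation relative to the Poincaré metric $\om_P$ as in \eqref{eq:ma2}, invoke the Siu--Yau/Chern--Lu inequality with the $\beta$-independent curvature bound supplied by the bounded geometry of $\om_P$, run the maximum principle on $H=\log\tr_{\om_P}\omb - A'\pb$, and feed in the uniform inputs $\Cb\le C$ (Lemma \ref{lem:dom}), $\pb\ge -C$ (Lemma \ref{est0}), $\vpb\le 0$, and $|\Delta_{\om_P}\Fb|\le C$ (which the paper obtains from the quasi-coordinates of \cite{KobR}) to conclude $\tr_{\om_P}\omb\le C(-\log|s|^2)^A$. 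Your treatment of the non-compactness is also the paper's: since $\omb$ has cone singularities it is qualitatively dominated by $\om_P$ near $D$, so $\tr_{\om_P}\omb$ stays bounded while $-\pb\to-\infty$ there, and $H$ attains its maximum on $X\smallsetminus D$; no $\ep$-barrier is needed for this proposition (the paper reserves that device for Theorem B).

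There is, however, one concrete error that would sink the argument if taken literally: the bad term in the Siu--Yau inequality for $\log\tr_{\om_P}\omb$ is $-B\,\tr_{\omb}\om_P$, \emph{not} $-\tr_{\om_P}\omb$ as you wrote. The distinction is not cosmetic, because your proposed absorption mechanism fails for the term you displayed: from $\Delta_{\omb}\pb = n+\beta\tr_{\omb}\theta-\tr_{\omb}\om_P$, the term $-A'\Delta_{\omb}\pb$ produces $+A'\tr_{\omb}\om_P$, and the AM--GM inequality $\tr_{\om_P}\omb\le(\tr_{\omb}\om_P)^{n-1}e^{\pb+\Fb}$ cannot make this dominate $-\tr_{\om_P}\omb$ when the traces are large, since $(\tr_{\omb}\om_P)^{n-1}$ outgrows $\tr_{\omb}\om_P$ for $n\ge2$. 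With the correct bad term $-B\,\tr_{\omb}\om_P$, no AM--GM is needed at the absorption stage: choosing $A'>B$, and absorbing $A'\beta\tr_{\omb}\theta$ into $\frac12\tr_{\omb}\om_P$ via $\theta\le M\om_P$ for $\beta$ small (a step you should not omit --- it is exactly how the paper sidesteps the absence of a uniform lower bound $\omb\ge c\,\om$, which is not available here), one gets $\Delta_{\omb}H\ge\frac12\tr_{\omb}\om_P-C'$, hence $\tr_{\omb}\om_P(x_0)\le 2C'$ at the maximum point $x_0$. The AM--GM inequality enters only once, at $x_0$, to convert this into $\log\tr_{\om_P}\omb(x_0)\le C+\pb(x_0)$; the global polynomial bound then follows as you describe from $H\le H(x_0)$, $\pb\ge-C$ and $\vpb\le0$. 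So: same route as the paper, but fix the direction of the trace in the differential inequality and the order of operations (absorb first, AM--GM only at the maximum point).
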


\begin{proof}
In order to prove this estimate, we will use Siu-Yau's inequality, cf \cite[Lemma 2.2]{CGP} for example:
\begin{lemm}
\label{klem}
Let  $\om,\om'$ be two Kähler forms on a complex manifold $X$, and let $f$ be defined by $\om'^{\,n}=e^f \om^n$. We assume that the holomorphic bisectional curvature of $\om$ is bounded below by some constant $B>0$. Then we have:
\[\Delta' \log \tr_{\om}(\om')\ge \, \frac{\Delta f}{\tr_{\om}(\om')}-B \,  \tr_{\om'}(\om)\]
where $\Delta$ (resp. $\Delta'$) is the Laplace operator attached to $\om$ (resp. $\om'$).
\end{lemm}
We are going to apply this lemma to $\om:=\om_P$, and $\om':=\omb=\om_P-\beta \theta+\ddc \pb$. As $\om_P$ has bounded geometry, its holomorphic bisectional curvature is bounded by some constant $B>0$ on $X\smallsetminus D$, hence we get from equation \eqref{eq:ma2} the following inequality:
\[\Delta_{\omb} \log \tr_{\om_P}(\omb)\ge \, \frac{\Delta_{\om_P} (\pb+\Fb)}{\tr_{\om_P}(\omb)}-B \,  \tr_{\omb}(\om_P)\]

The laplacian of $\Fb$ is bounded on $X\smallsetminus D$, and this bound is uniform in $\beta$. Indeed, $\ddc \Fb = \ddc h - \beta\theta + \ddc \log \left( \frac{\om^n}{|s|^2 \log^2 |s|^2 \om_P^n}\right) $. As $\om_P \ge C^{-1}\om$ for some $C>0$, the first two terms are easily dominated (in absolute value) by a multiple of $\om_P$. Now, the term with the logarithm is smooth in the quasi-coordinates (cf e.g. \cite{KobR}), so in particular its hessian is dominated by a multiple of $\om_P$. As a consequence, $|\Delta_{\om_P} \Fb | \le C$.

Moreover, $\om_P-\beta \theta +\ddc \pb \ge 0$, so $\Delta_{\om_P} \pb \ge \beta \tr_{\om_P} \theta - n \ge -C$ for some uniform $C$. Combining this two estimates with the basic inequality $\tr_{\om_P}(\omb) \cdotp \tr_{\omb}(\om_P)\ge n$, we obtain:
\[\Delta_{\omb} \log \tr_{\om_P}(\omb)\ge -C \,  \tr_{\omb}(\om_P)\]
for some uniform $C$. Furthermore, $\Delta_{\omb} \pb = n+ \beta \tr_{\omb} \theta - \tr_{\omb} \om_P$, which leads to:
\begin{equation}
\label{eq2}
\Delta_{\omb} \left( \log \tr_{\om_P}(\omb)-(C+1) \pb\right) \ge \tr_{\omb}(\om_P) -(C+1) \beta \tr_{\omb}\theta - n(C+1)
\end{equation}
At that point, we need to control the term $\tr_{\omb}\theta$; this would be easy if we could show that $\omb$ dominates some fixed Kähler form (independent of $\beta$), but it turns out that this fact does not seem obvious to prove (essentially because there is no uniform bound on $||\vpb||_{\infty}$). Instead, we can take advantage of the robustness of the method and dominate $ \theta$ by some multiple of $\om_P$ so that (up to choosing a smaller $\beta_0$), we have $(C+1)\beta \tr_{\omb} \theta \le \frac{1}{2} \tr_{\omb} \om_P$ whenever $\beta< \beta_0$. Plugging this inequality into \eqref{eq2}, we get a new constant $C'$ satisfying:
\begin{equation}
\label{eq3}
\Delta_{\omb} \left( \log \tr_{\om_P}(\omb)-(C+1) \pb\right) \ge \frac{1}{2} \tr_{\omb}(\om_P) -C'
\end{equation}
We are now in position to apply the maximum principle. Indeed, as $\omb$ has cone singularities, then $\tr_{\om_P}(\omb)$ is (qualitatively) bounded from above, whereas $-\pb = -\vpb - \log \log^2 |s|^2$ goes to $-\infty$ near $D$ (remember that the potential $\vpb$ of the cone metric is bounded). Therefore the smooth function $H:=\log \tr_{\om_P}(\omb)-(C+1) \pb$ attains its maximum on $X\smallsetminus D$, at a point say $x_0$ (depending on $\beta$).
At that point, inequality \eqref{eq3} combined with the maximum principle yield $ \tr_{\omb}(\om_P)(x_0)\le 2C'$. As a result, we have: 
\begin{eqnarray*}
\log \tr_{\om_P} (\omb) &=& H+(C+1) \pb\\
& \le & \log \tr_{\om_P} (\omb)(x_0)+(C+1) (\pb-\pb(x_0))
\end{eqnarray*}
To control the term involving the logarithm, we use the following inequality
\[\tr_{\om_P}(\omb) \le (\tr_{\omb}(\om_P))^{n-1} e^{\pb+\Fb}\]
which gives, when applied at $x_0$:
\[\log \tr_{\om_P} (\omb)(x_0) \le 2(n-1) \log (2C')+\pb(x_0) + \sup \Fb\]
where we know that $\sup \Fb$ can be controlled uniformly in $\beta$ (cf Lemma \ref{lem:dom}). Combining the two previous inequalities, we obtain
\[\log \tr_{\om_P} (\omb) \le 2(n-1) \log (2C')-C\pb(x_0) + \sup \Fb+(C+1) \pb\]
Remembering that $\pb$ is uniformly bounded from below thanks to Lemma \ref{est0}, we end up with positive constants $A,C$ such that 
\[ \tr_{\om_P} (\omb) \le C (-\log |s|^2)^A\]
from which Proposition \ref{lap} follows.
\end{proof}

\section{The curvature bound}
\label{sec:curv}

In this section, we introduce a particular conic metric which will turn out to behave exactly like the Kähler-Einstein metric (i.e. in a uniform way with respect to the cone angle going to zero). The key property that we will use to establish this fact is the uniform boundedness of its curvature, cf Theorem \ref{curv}.
\subsection{The reference metric}

Let $X$ be a compact Kähler manifold, $\om$ a background Kähler form, $D$ a smooth divisor cut out by an holomorphic section $s$ of the associated line bundle, and let finally $h=|\cdotp|$ be a smooth hermitian metric on $\O_X(D)$ normalized such that $|s|^2 < e^{-1}$. For any $\beta \in (0,1)$, we introduce the following reference metric:
\[\omb:=\om-\ddc \log\left[\frac{1-|s|^{2\beta}}{\beta}\right]^2 \]
So far, $\omb$ is just a closed $(1,1)$ current, but direct computations show the following:

\begin{lemm}
\label{lem:met}
Up to rescaling $h$, $\omb$ is a Kähler form on $X\smallsetminus D$ having conic singularities along $D$ with cone angle $2\pi\beta$ and such that $\omb \ge \frac 1 2 {\om}$. More precisely, we have
\[\omb=\om+\frac{\beta^2}{|s|^{2(1-\beta)}(1-|s|^{2\beta})^2} \, \la D's,D's \ra - \frac{\beta |s|^{2\beta}}{1-\sb} \, \Theta\]
where $D'$ is the $(1,0)$ part of the Chern connection of $(\Ox(D),h)$ and $\Theta$ is its Chern curvature. 
\end{lemm}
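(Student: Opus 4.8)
The plan is to reduce the statement to a single direct computation of $\ddc$ of the potential and then read off the three assertions. Since $\log\beta$ contributes only an additive constant and the exponent $2$ only rescales by a fixed factor (absorbed into the normalization of $\ddc$), the whole lemma reduces to computing $\ddc\log(1-\sb)$ on $X\smallsetminus D$. So, writing $t:=|s|^2$ and $g:=1-\sb=1-t^\beta$, everything hinges on expanding $\ddc$ of the composite function $\log g$.

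The two geometric identities I would establish first convert derivatives of $t$ into the curvature quantities appearing in the formula. Writing the Chern connection as $D=D'+\bar\partial$, metric compatibility together with $\bar\partial s=0$ gives $\partial t=\langle D's,s\rangle$, and differentiating once more yields
\[\ddc\, t=\la D's,D's\ra-t\,\Theta,\]
where $\Theta$ is the Chern curvature (so that $\ddc\log t=-\Theta$ away from $D$, i.e. Poincaré--Lelong). The second identity is the rank-one Cauchy--Schwarz \emph{equality}
\[i\,\partial t\wedge\bar\partial t=t\,\la D's,D's\ra,\]
which holds because $s$ is a section of a \emph{line} bundle: in a local frame $s=fe$ both sides equal $t\cdot e^{-\phi}\,i\alpha\wedge\bar\alpha$ with $\alpha:=\partial f-f\,\partial\phi$. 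These are the only inputs; the rest is the chain rule.

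Then I would apply $\ddc\log g=\frac{\ddc g}{g}-\frac{i\,\partial g\wedge\bar\partial g}{g^2}$ with $g=1-t^\beta$. Using $\partial g=-\beta t^{\beta-1}\partial t$ and the two identities gives $\ddc g=-\beta^2 t^{\beta-1}\la D's,D's\ra+\beta t^\beta\,\Theta$ and $i\,\partial g\wedge\bar\partial g=\beta^2 t^{2\beta-1}\la D's,D's\ra$. Substituting and collecting the coefficient of $\la D's,D's\ra$, the decisive algebraic simplification is
\[\frac{t^{\beta-1}}{g}+\frac{t^{2\beta-1}}{g^2}=\frac{t^{\beta-1}(g+t^\beta)}{g^2}=\frac{t^{\beta-1}}{g^2},\]
which uses precisely $g+t^\beta=1$. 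This collapses the two $\la D's,D's\ra$ contributions into the single term $\frac{\beta^2}{\sbb(1-\sb)^2}\la D's,D's\ra$ and leaves the curvature term $-\frac{\beta\sb}{1-\sb}\Theta$, i.e. the announced formula $\omb=\om+\frac{\beta^2}{\sbb(1-\sb)^2}\la D's,D's\ra-\frac{\beta\sb}{1-\sb}\Theta$.

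Finally I would extract the three claims. The coefficient of $\la D's,D's\ra$ is manifestly nonnegative, and in local coordinates with $D=(z_1=0)$ its leading singularity is $\frac{\beta^2}{|z_1|^{2(1-\beta)}}\,idz_1\wedge d\bar z_1$ in the normal direction, quasi-isometric to the standard cone model of angle $2\pi\beta$, while the tangential directions retain the smooth form $\om$; hence $\omb$ has conic singularities of angle $2\pi\beta$. For the bound $\omb\ge\frac12\om$, the only dangerous term is $-\frac{\beta\sb}{1-\sb}\Theta$: the point is that the scalar $\frac{\beta\sb}{1-\sb}$ is uniformly small once $|s|^2$ is small (for $t<e^{-1}$ the function $\beta\mapsto\frac{\beta t^\beta}{1-t^\beta}$ is bounded on $(0,1)$, with limit $\sim 1/\log(1/t)$ as $\beta\to0$), so rescaling $h$ by a small constant — which leaves $\Theta$ unchanged but shrinks $t$ — makes this term dominated by $\frac12\om$; together with the nonnegative $\la D's,D's\ra$-term this forces $\omb\ge\frac12\om$, so $\omb$ is genuinely Kähler on $X\smallsetminus D$. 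The only mildly delicate points in the whole argument are the rank-one \emph{equality} $i\,\partial t\wedge\bar\partial t=t\la D's,D's\ra$ (which would only be an inequality in higher rank) and keeping the $\ddc$-normalization consistent so the constant matches the $\beta^2$ of $\om_{\beta,\D^*}$; the remainder is bookkeeping.
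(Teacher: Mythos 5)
Your proposal is correct and follows essentially the same route as the paper: the formula is the chain-rule computation $\ddc(\chi\circ\vp)=\chi'(\vp)\,\ddc\vp+\chi''(\vp)\,d\vp\wedge d^c\vp$ with $\vp=|s|^2$ and $\chi(t)=-\log(1-t^{\beta})$ (your identities $\ddc|s|^2=\la D's,D's\ra-|s|^2\Theta$ and $i\,\d|s|^2\wedge\b\d|s|^2=|s|^2\la D's,D's\ra$ are just this made explicit, with the same cancellation $(1-t^\beta)+t^\beta=1$), and the bound $\omb\ge\frac12\om$ is obtained exactly as in the paper by rescaling $h$ so that $f_\beta(t)=\beta t^{\beta}/(1-t^{\beta})$ is uniformly small in $\beta\in(0,1)$. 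The only cosmetic difference is how the uniformity in $\beta$ is justified — the paper invokes monotonicity of $f_\beta$ in $t$ and the limit $f_\beta(t)\to(-\log t)^{-1}$ as $\beta\to0$, while your "bounded with limit $\sim 1/\log(1/t)$" phrasing is best tightened to the one-line inequality $f_\beta(t)\le(-\log t)^{-1}$ (from $x/(e^x-1)\le 1$ with $x=-\beta\log t$), which makes the claim airtight.
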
 

\begin{proof}
The formula is derived from the identity $\ddc \chi \circ \vp= \chi'(\vp) \ddc \vp + \chi''(\vp) d\vp \wedge d^c \vp$ applied to $\vp=|s|^2$ and $\chi(t)= -\log(1-t^{\beta})$. To see $\omb$ defines a (uniform) Kähler form outside $D$ we have to check that $\beta |s|^{2\beta}/(1-\sb)$ can be made arbitrarily small (uniformly with $\beta$) by rescaling $h$, which does not affect the curvature form $\Theta$. And this is a consequence of the fact that the function $f_{\beta}:t\mapsto \frac{\beta t^{\beta}}{1-t^{\beta}}$ is increasing on $(0,1)$, and that $f_{\beta}(t) \to (-\log t)^{-1}$ when $\beta \to 0$. As $(-\log t)^{-1}$ converges to $0$ when $t\to 0$, it guarantees that for any $\delta>0$, one can choose $t_{\delta} \in (0,1)$ such that $f_{\beta}(t) \le \delta$ on $(0, t_{\delta})$ for all $\beta \in (0,1)$. Then, we take $\delta =(2 \sup_X \tr_{\om} \Theta)^{-1}$ and we scale $h$ such that $|s|^2 \le \delta$; by the discussion above, we will have $-\beta \sb (1-\sb)^{-1} \Theta \ge -\frac 1 2 \om$, hence $\omb \ge \frac 12 \om$.
\end{proof}

A simple but fundamental remark lies in the fact the function $(1-\sb)/\beta$ converges in $L^1$ topology to $-\log |s|^2$ on $X$ when $\beta \to 0$. In particular, $\omb$ converges weakly to the Poincaré type metric $\om_0$ on $X\smallsetminus D$ given by $\om_0:=\om-\ddc \log \log^2|s|^2$. Moreover, it can be checked easily (using Arzelà-Ascoli theorem combined with $(i)$ in Lemma \ref{lem:elem} below) that this convergence actually happens in $\mathscr C_{\rm loc}^{\infty}(X\smallsetminus D)$.\\

The main result of this section is the following:

\begin{theo}
\label{curv}
There exists a constant $C>0$ depending only on $X$ such that for all $\beta \in (0,\frac 12]$, the holomorphic bisectional curvature of $\omb$ is bounded by $C$.
\end{theo}

The rest of this section is devoted to the proof of this statement. We should point out that the proof gets simplified a lot when one assumes that $\beta= 1-1/m$ for an integer $m$ (that will eventually go to $+\infty$), as in that case one can pull back the metric to an orbifold cover where it has uniformly bounded geometry (with respect to $m$). However, in the general conic case, one cannot use such an argument anymore.\\

Let us begin by setting up some notations, and operate a few simplifications for the computations to follow. 
We fix a point $p\in X\smallsetminus D$, and it is a very standard fact that we can find some local holomorphic coordinates around $p$ say $(z_1, \ldots, z_n)$ such that the metric $h=e^{-\vp}$ on $L$ satisfies $\vp(p)=0$ and $d\vp(p)=0$. We can also ass
ume that these coordinates trivialize $L$, and that $s=z_1$ there. We denote by $(g_{i\bar j})$ the Riemannian metric induced by $\omb$ in these coordinates, and we are interested in its curvature tensor
\[\rij = -\frac{\d^2 g_{i\bar j}}{\d z_k \d \bar z_l} +\sum_{\alpha, \beta} g^{\alpha \bar \beta}\frac{\d  g_{i \bar \alpha}}{\d z_k}  \frac{\d g_{\beta \bar j}}{\d \bar z_l} \]
So we consider two tangent vectors $u=\sum u_i \frac{\d}{\d z_i}$ and $v= \sum v_k \frac{\d}{\d z_k}$ whose norm computed with respect to $\omb$ is equal to $1$: $|u|^2_{\omb}=|v|^2_{\omb}=1$. Our ultimate goal is to prove a bound
\[\left|\rij u_i \bar u_j v_k \bar v_l \right| \le C\]

\subsection{A precise expression of the metric}
We will now express our metric $(g_{i\bar j})$ in the coordinates introduced above. We know that \[\la D's, D's \ra=e^{-\vp}\left[ (dz_1+z_1 \frac{\d \vp}{\d z_k}dz_k) \wedge (d\bar z_1+\bar z_1 \frac{\d \vp}{\d \bar z_l} d \bar z_l)\right] \] Therefore, there exist smooth functions $a,b_k,c_{k\bar l}$ $(2 \le k,l \le n)$ vanishing at $p$ up to order 2 (i.e. they vanish at $p$, and so do their differential) such that 
\[\la D's, D's \ra = (1+a) dz_1 \wedge d \bar z_1 + \sum_{k>1} (z_1b_k \, dz_k \wedge d \bar z_1+\overline{ z_1  b_k}\, dz_1 \wedge d \bar z_k)+ |z_1|^2 \sum_{k,l>1} c_{k\bar l}dz_k \wedge d \bar z_l\]

Before going any further, let us introduce some convenient notations. First, in all the following, we set $t:=|s|^2$, and we define the following two functions:
\[A(t):= \beta^2 t^{\beta-1}(1-t^{\beta})^{-2} \quad \textrm{and} \quad B(t):=\beta t^{\beta}(1-t^{\beta})^{-1}\]

\vspace{3mm}
\noindent
With these expressions at hand, we can get an concise expression of the coefficients of our metric (here, $k,l$ vary between $2$ and $n$)
\begin{eqnarray}
\label{eq:g}
g_{1\bar 1}&=& (1+a) A(t)-B(t) \Theta_{1\bar 1}+\gt_{1\bar 1} \nonumber \\
g_{k\bar 1} &=& z_1 b_{k} A(t)-B(t) \Theta_{k\bar 1}+\gt_{k\bar 1} \\
g_{k \bar l} &=& |z_1|^2 c_{k\bar l} A(t)-B(t) \Theta_{k\bar l}+\gt_{k\bar l} \nonumber
\end{eqnarray}
where $\gt$ is the Riemannian metric associated to the background Kähler form $\om$, and $\Theta_{i \bar j}$ are the components of the form $\Theta$ in the considered coordinates. Given the expression of $g$ above, one can deduce the following estimates for the inverse metric of $g$, valid at $p$:
\begin{eqnarray}
\label{eq:ginv}
g^{1\bar 1}&=& A(t)^{-1}(1+O(A(t)^{-1})) \nonumber \\
g^{k\bar 1} &=& O(A(t)^{-1})  \\
g^{k \bar l} &=& O(1) \nonumber
\end{eqnarray}
Indeed, $(g_{i\bar j}(p))=A(t)E_{1\bar 1}+O(1)$ and $A(t)$ tends to $+\infty$ when $t\to 0$ (in a non uniform way with respect to $\beta$ though). If $d(t)$ is the determinant of $(g_{i\bar j})_{i,j\ge 2}$ (it also depends on $p$ of course), then we have $\det g =A(t) d(t)+O(1)$, hence by Cramer's formula, $g^{1\bar 1}=d(t)(A(t) d(t)+O(1))^{-1}$. As $d(t)$ is uniformly bounded away from $0$, we obtain the expected result. The second estimate is a consequence of the fact that the $(k,1)$-minor of $(g_{i\bar j})$ is a $O(1)$, combined with the estimate on the determinant above. The last estimate is obtained in the same way.\\

In order to estimate the curvature tensor of $g$, we will certainly need to study the functions $A,B$ and their derivatives. So we have collected a few computations about these functions:

\begin{lemm}
\label{lem:elem}
We have the following:
\begin{enumerate}
\item[$(i)$] Given any $t\le 1/4$, we have:
\[\frac{\beta}{1-t^{\beta}} \le 1\]
\item[$(ii)$] For any $t\in (0, +\infty)$, we have:
\[\frac{1-t^{\beta}}{\beta} \le - \log t\]
\item[$(iii)$] When $t\to 0$, we have:
\[A(t)=O(t^{\beta-1}), \, A'(t)=O(t^{\beta-2}), \, A''(t)=O(t^{\beta-3})\]
and 
\[B(t)=O(1), \,B'(t)=O(t^{\beta-1}), \, B''(t)=O(t^{\beta-2})\]
\item[$(iv)$] For $k=1\ldots n$, we have: 
\[g^{k\bar 1}=O(t^{1-\beta}(-\log t)^{2})\]
\end{enumerate}
All the $O$  above are uniform in $\beta$.\\
\end{lemm}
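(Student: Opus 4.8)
The plan is to isolate all the $\beta$-uniformity in the two elementary inequalities $(i)$ and $(ii)$, and then obtain $(iii)$ and $(iv)$ mechanically, by differentiating the explicit formulas for $A,B$ and by plugging these inequalities into the already-established inverse-metric estimates \eqref{eq:ginv}. The basic observation I would exploit throughout is that $A(t)=\bigl(\frac{\beta}{1-t^\beta}\bigr)^2 t^{\beta-1}$ and $B(t)=\bigl(\frac{\beta}{1-t^\beta}\bigr)t^{\beta}$, so that $(i)$ is precisely the statement that the recurring factor $\frac{\beta}{1-t^\beta}$ is $\le 1$.

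For $(i)$ I would rewrite $\frac{\beta}{1-t^\beta}\le 1$ as $t^\beta\le 1-\beta$; since $t\mapsto t^\beta$ is increasing it is enough to verify this at the endpoint $t=1/4$, i.e. that $g(\beta):=e^{-\beta\log 4}-(1-\beta)$ satisfies $g\le 0$ on $(0,\tfrac12]$. One has $g(0)=0$ and, because $\log 4=2\log 2$, also $g(\tfrac12)=e^{-\log 2}-\tfrac12=0$; as $g$ is convex ($g''>0$) it is nonpositive between its two roots. This is exactly where the restriction $\beta\le\tfrac12$ is forced, the value $t=1/4$ being chosen so that the second root falls at $\beta=\tfrac12$. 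For $(ii)$ I would substitute $r:=-\beta\log t$, turning $\frac{1-t^\beta}{\beta}\le-\log t$ into the universal inequality $1-e^{-r}\le r$ (equivalently $e^{-r}\ge 1-r$), valid for all real $r$ and hence for all $t\in(0,+\infty)$.

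For $(iii)$, differentiating the explicit expressions shows that $A,A',A''$ and $B,B',B''$ are each finite sums of terms of the form $(\text{bounded const})\cdot\bigl(\frac{\beta}{1-t^\beta}\bigr)^m\,t^{p\beta-q}$ with $m\ge 0$, $p\ge 1$ and $q\ge 0$. By $(i)$ each factor $\frac{\beta}{1-t^\beta}$ is $\le 1$ uniformly, and since $t^\beta\le 1$ for $t\le 1$ the surplus power $t^{(p-1)\beta}$ only helps; hence every such term is $O(t^{\beta-q})$ (and $O(1)$ when $q=0$). Reading off the largest $q$ that occurs in each case gives exactly $A=O(t^{\beta-1})$, $A'=O(t^{\beta-2})$, $A''=O(t^{\beta-3})$ and $B=O(1)$, $B'=O(t^{\beta-1})$, $B''=O(t^{\beta-2})$, with constants independent of $\beta$.

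For $(iv)$ I would combine the estimate $g^{k\bar 1}=O(A(t)^{-1})$ coming from \eqref{eq:ginv} — which also covers $k=1$, since there $g^{1\bar 1}=A(t)^{-1}(1+O(A(t)^{-1}))$ and $A(t)^{-1}\to 0$ — with the identity $A(t)^{-1}=t^{1-\beta}\bigl(\frac{1-t^\beta}{\beta}\bigr)^2$ and the bound $(ii)$, which yields $A(t)^{-1}\le t^{1-\beta}(-\log t)^2$ and therefore $g^{k\bar 1}=O(t^{1-\beta}(-\log t)^2)$. The one genuinely delicate point in the whole lemma is the uniformity in $\beta$, and it is entirely concentrated in $(i)$: the convexity argument pinning $\frac{\beta}{1-t^\beta}\le 1$ to the range $\beta\le\tfrac12$ is what renders all the subsequent $O$'s uniform, while the rest is bookkeeping of powers of $t$.
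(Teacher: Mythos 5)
Correct, and essentially the same approach as the paper: you reduce all the $\beta$-uniformity to the elementary bounds $(i)$--$(ii)$, deduce $(iii)$ by expressing $A,B$ and their derivatives through the factor $\beta(1-t^\beta)^{-1}\le 1$, and obtain $(iv)$ formally from \eqref{eq:ginv} together with $(ii)$, exactly as in the text. Your only deviations are cosmetic: convexity of $\beta\mapsto 4^{-\beta}-(1-\beta)$ with roots at $0$ and $\tfrac 12$ in place of the paper's monotonicity of $t_\beta=(1-\beta)^{1/\beta}$ for $(i)$; the substitution $r=-\beta\log t$ reducing $(ii)$ to the universal inequality $1-e^{-r}\le r$ in place of minimizing $t\mapsto -\beta\log t+t^\beta-1$; and a structural induction on terms of the form $\bigl(\beta(1-t^\beta)^{-1}\bigr)^m t^{p\beta-q}$ in place of the paper's explicit computation of $A'$ and $A''$ (together with the identity $B'=A$).
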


\begin{proof}
For $(i)$, the function $t\mapsto1-t^{\beta}-\beta$ is decreasing, and vanishes at $t_{\beta}=(1-\beta)^{1/\beta}$. Moreover, $\beta\mapsto t_{\beta}$ is a decreasing function too, and its value at $\beta=1/2$ is $1/4$, hence the result. As for $(ii)$, we consider the function $f:t\mapsto -\beta \log t+t^{\beta}-1$. It is smooth outside $0$ where it is $+\infty$. Moreover, its derivative is $\beta /t(t^{\beta}-1)$ so that the minimum of our function is attained at $t=1$, where the function vanishes.

For $(iii)$, the fact that $B(t)=O(1)$ follows from $(i)$. The other estimates for the derivatives of $B$ are a consequence of those for $A$ as $B'(t)=A(t)$. The estimate for $A(t)$ follows from $(i)$. Moreover, 
\begin{eqnarray*}
A'(t)&=&\beta^2t^{\beta-2}(1-t^{\beta})^{-3}\left[(\beta-1)+(\beta+1)t^{\beta}\right]\\
&=&\left(\beta^3(1-t^{\beta})^{-3}(1+t^{\beta})-\beta^2(1-t^{\beta})^{-2}\right)t^{\beta-2}
\end{eqnarray*}
and the expected result is a consequence of $(i)$. Finally, 
\begin{eqnarray*}
A''(t)&=&\beta^2t^{\beta-3}(1-t^{\beta})^{-4}\left[(\beta-1)(\beta-2)+4(\beta^2-1)t^{\beta}+(\beta^2+3\beta+2) t^{2\beta}\right]\\
&=&\beta^2t^{\beta-3}(1-t^{\beta})^{-4}\left[\beta^2(1+4t+t^{2\beta})+(1-t^{\beta})^2+3\beta(1-t^{\beta})(1+t^{\beta)}\right]\\
&=&\left(\beta^4(1-t^{\beta})^{-4}(1+4t+t^{2\beta})+\beta^2(1-t^{\beta})^{-2}+3\beta^3(1-t^{\beta})^{-3}(1+t^{\beta})\right)t^{\beta-3}
\end{eqnarray*}
and we conclude by $(i)$ once again.

Finally, $(iv)$ is a formal consequence of \eqref{eq:ginv} and $(ii)$.
\end{proof}

\subsection{Curvature estimates}
In the following, the indexes $i,j,k,l$ will implicitly be assumed to be different from $1$. Also, as $d( \chi \circ \vp)=\chi'(\vp) d\vp$ and $\ddc( \chi \circ \vp )= \chi''(\vp) d\vp \wedge d^c \vp +\chi'(\vp) \ddc \vp$, we have at $p$:
\begin{eqnarray}
\label{eq:der}
d(A(t))(p)&=&A'(t)\bar z_1 dz_1 \\
\ddc A(t)(p)&=&(tA''(t)+A'(t)) dz_1\wedge d\bar z_1-tA'(t) \Theta \nonumber
\end{eqnarray}
and similarly for $B$.\\

\noindent
Let us now start by estimating the first derivatives of $g$. By \eqref{eq:g} and \eqref{eq:der}, we have:
\[dg_{1\bar 1}(p)=(A'(t)\bar z_1-B'(t)\bar z_1 \Theta_{1\bar 1})\, dz_1+O(1)\]
Therefore, 
\begin{equation}
\label{eq:der1}
\frac{\d g_{1\bar 1}}{\d z_1}(p)= O(t^{\beta-3/2}) \quad \textrm{and} \quad \frac{\d g_{1\bar 1}}{\d z_j}(p)=O(1)
\end{equation}
Similarly, it follows from \eqref{eq:g} that
\begin{equation}
\label{eq:der2}
\frac{\d g_{1\bar j}}{\d z_1}(p)= O(t^{\beta-1/2}) \quad \textrm{and} \quad \frac{\d g_{1\bar j}}{\d z_k}(p)=O(1)
\end{equation}
and 
\begin{equation}
\label{eq:der3}
\frac{\d g_{j\bar k}}{\d z_1}(p)= O(t^{\beta-1/2}) \quad \textrm{and} \quad \frac{\d g_{j\bar k}}{\d z_l}(p)=O(1)
\end{equation}
As for the second derivatives, we have
\begin{eqnarray*}
\ddc g_{1\bar 1}(p)&=&\left[(tA''(t)+A'(t))+(tB''(t)+B'(t))\Theta_{1\bar 1}\right] dz_1\wedge d \bar z_1 \\
&&-tA'(t)\Theta-B'(t)(\bar z_1 dz_1 \wedge O(1)+O(1)\wedge z_1 d\bar z_1)+O(1)
\end{eqnarray*}
so that 
\begin{equation}
\label{eq:der4}
\frac{\d^2 g_{1\bar 1}}{\d z_1 \d \bar z_1}(p)= (tA''(t)+A'(t))+O(t^{\beta-1}) 
\end{equation}
as well as 
\begin{equation}
\label{eq:der5}
\frac{\d^2 g_{1\bar 1}}{\d z_j \d \bar z_1}(p)= O(t^{\beta-1})  \quad \textrm{and} \quad \frac{\d^2 g_{1\bar 1}}{\d z_j \d \bar z_k}(p)=  O(t^{\beta-1}) 
\end{equation}
Similarly, we get
\begin{equation}
\label{eq:der6}
\frac{\d^2 g_{1\bar j}}{\d z_1 \d \bar z_1}(p)= O(t^{\beta-1})\, , \quad   \frac{\d^2 g_{1\bar j}}{\d z_1 \d \bar z_k}(p)=  O(t^{\beta-1/2}) \quad \textrm{and} \quad \frac{\d^2 g_{1\bar j}}{\d z_k \d \bar z_l}(p)=  O(t^{\beta-1/2})
\end{equation}
and finally
\begin{equation}
\label{eq:der7}
\frac{\d^2 g_{i\bar j}}{\d z_1 \d \bar z_1}(p)= O(t^{\beta-1})\, , \quad   \frac{\d^2 g_{i\bar j}}{\d z_1 \d \bar z_k}(p)=  O(t^{\beta-1/2}) \quad \textrm{and} \quad \frac{\d^2 g_{i\bar j}}{\d z_k \d \bar z_l}(p)=  O(1)
\end{equation}

\vspace{3mm}
We are now ready to estimate the bisectional curvature of $g$. So we take two tangent vectors $u=\sum u_i \frac{\d}{\d z_i}$ and $v= \sum v_k \frac{\d}{\d z_k}$ satisfying $|u|^2_{\omb}=|v|^2_{\omb}=1$. In particular, there exists a uniform constant $C>0$ such that
\begin{equation}
\label{vect}
|u_1|^2 \le C A(t)^{-1} \quad \textrm{and} \quad |u_j|^2 \le C 
\end{equation}
and likewise for $v$. We ultimately want to bound the sum $\sum_{i,j,k,l} \rij u_i \bar u_j v_k \bar v_l$; we are going to proceed term by term splitting the cases according to the number of times where $1$ appears in $(i,j,k,l)$.\\

$\bullet$ \textbf{Case 1:} $\{i,j,k,l\}=\{1\}$.

\noindent
This is the most singular term. We split $R_{1\bar 1 1 \bar 1}$ into to terms:
\[R_{1\bar 1 1 \bar 1}=\left(- \frac{\d^2 g_{1\bar j}}{\d z_1 \d \bar z_1}+g^{1\bar 1} \left|\frac{\d g_{1\bar 1}}{\d z_1} \right|^2\right)+\sum_{(\alpha, \beta)\neq(1,1)} g^{\alpha \bar \beta}\frac{\d  g_{1 \bar \alpha}}{\d z_1}  \frac{\d g_{\beta \bar 1}}{\d \bar z_1} \]
Let us deal first with the second term. By \eqref{eq:ginv} and \eqref{eq:der1}-\eqref{eq:der2}, this term is either a $O(A(t)^{-1}t^{2\beta -2})$ if $1\in \{\alpha, \beta\}$ or a $O(t^{2\beta-1})$ else. Whenever we multiply it by $|u_1|^2 |v_1|^2$, it becomes either a $O((A(t)^{-3}t^{2\beta -2})$ or a $O(A(t)^{-2}t^{2\beta-1})$ thanks to \eqref{vect}. As $A(t)^{-1}=O(t^{1-\beta}(-\log t)^2)$, our term is dominated by $t^{1-\beta}(-\log t)^6$ or $t (-\log t)^4$, so in particular it is uniformly bounded.

\noindent
The first summand is subtler to deal with, as the estimate \eqref{eq:der1} is not precise enough to conclude. So we write:
\[\frac{\d g_{1\bar 1}}{\d z_1}(p)=A'(t)\bar z_1+O(t^{\beta-1/2})\]
so that 
\begin{eqnarray*}
g^{1\bar 1} \left|\frac{\d g_{1\bar 1}}{\d z_1} \right|^2  &=&A(t)^{-1}\left((1+O(A(t)^{-1})\right)\left(tA'(t)^2+O(t^{2\beta-2})\right)    \\
&=&tA(t)^{-1}A(t)^2+O(A(t)^{-1}t^{2\beta-2})+O(A(t)^{-2}tA'(t)^2)\\
&=&tA(t)^{-1}A(t)^2+O(t^{\beta-1}(-\log t)^2)+O(t^{-1}(-\log t)^{4})\\
&=&tA(t)^{-1}A(t)^2+O(t^{-1}(-\log t)^{4})
\end{eqnarray*}
Remembering from \eqref{eq:der4} that:
\[\frac{\d^2 g_{1\bar 1}}{\d z_1 \d \bar z_1}(p)= tA''(t)+A'(t)+O(t^{\beta-1}) \]
we end up with 
\[- \frac{\d^2 g_{1\bar j}}{\d z_1 \d \bar z_1}+g^{1\bar 1} \left|\frac{\d g_{1\bar 1}}{\d z_1} \right|^2=-(tA''(t)+A'(t))+tA(t)^{-1}A'(t)^2+O(t^{-1}(-\log t)^{4})\]
The dominant term looks like it will give rise to unbounded curvature, but actually some cancellations come up (as they should, in view of the fact that the Poincaré metric on the punctured disk has constant curvature). More precisely, an easy though tedious computation based on the expressions of $A,A',A''$ given in the proof of Lemma \ref{lem:elem} shows that \[-(tA''(t)+A'(t))+tA(t)^{-1}A'(t)^2=-2A(t)^2\] Therefore, 
\[|u_1|^2|v_1|^2\left(- \frac{\d^2 g_{1\bar j}}{\d z_1 \d \bar z_1}+g^{1\bar 1} \left|\frac{\d g_{1\bar 1}}{\d z_1} \right|^2\right)=O(1)+O(t^{1-2\beta}(-\log t)^{4})\]
hence $|R_{1\bar 1 1\bar 1}| |u_1|^2|v_1|^2 \le C$ for some uniform $C$ as long as $\beta$ varies in $(0, \beta_0]$ with $\beta_0<1/2$.

\noindent
However, if $\beta$ varies in $[\beta_0, 1/2]$, we use the majoration $A(t)^{-1} \le C_0 t^{1-\beta_0}$ where $C_0$ only depends on $\beta_0$, which is finer than $A(t)^{-1}=O(t^{1-\beta} (-\log t)^2)$, and all the expected bounds follow easily. This observation can be applied in the following cases two, so we will not repeat it each time as we will implicitly assume that $\beta$ varies in $(0, \beta_0]$ for some fixed $\beta_0<1/2$.\\

$\bullet$ \textbf{Case 2:} three indexes are equal to $1$.

\noindent
By the symmetries of the curvature tensor, it is enough to consider $R_{1\bar 1 1 \bar k}$. 

\noindent
First we have from \eqref{eq:der5}:
\[\frac{\d^2 g_{1\bar 1}}{\d z_1 \d \bar z_k}(p)= O(t^{\beta-1})   \]
Now remember from \eqref{eq:der1}-\eqref{eq:der2} that
\[\frac{\d g_{1\bar 1}}{\d z_1}(p)= O(t^{\beta-3/2}) \quad \textrm{and} \quad \frac{\d g_{1\bar 1}}{\d z_k}(p)=O(1)\]
and
\[\frac{\d g_{1\bar \alpha}}{\d z_1}(p)= O(t^{\beta-1/2}) \quad \textrm{and} \quad \frac{\d g_{\beta \bar 1}}{\d \bar z_k}(p)=O(1)\]
so that
\[ g^{\alpha \bar \beta}\frac{\d  g_{1 \bar \alpha}}{\d z_1}  \frac{\d g_{\beta \bar 1}}{\d \bar z_k} = 
\begin{cases}
O(t^{\beta-1/2}) & \textrm{if \,} \alpha, \beta \neq 1\\
O(A(t)^{-1}t^{\beta-3/2})  & \textrm{if \,} \alpha=1, \beta \neq 1\\
O(1)  & \textrm{if \,} \alpha \neq 1, \beta=1 \\
O(A(t)^{-1}t^{\beta-3/2})  & \textrm{if \,} \alpha =\beta=1 
\end{cases} \]
So in any case, this quantity is a $O(A(t)^{-1}t^{\beta-3/2})$. Therefore
\begin{eqnarray*}
R_{1\bar 1 1 \bar k} \,  |u_1|^2 v_1 \bar v_k&=&O(A(t)^{-3/2}t^{\beta-1})+O(A(t)^{-5/2}t^{\beta-3/2})\\
&=&O(t^{1/2-\beta/2}(-\log t)^3)+O(t^{1-3\beta/2}(-\log t)^5) \\
&=&O(1)
\end{eqnarray*}

$\bullet$ \textbf{Case 3:} two indexes are equal to $1$.

\noindent
Again, using the symmetries, one can reduce to estimating the following two quantities: $R_{1 \bar 1 k \bar l}$ and $R_{1 \bar k 1 \bar l}$. 

\noindent
Let us start with the first one. We know from \eqref{eq:der1}, \eqref{eq:der2} that
\[ \frac{\d g_{1\bar 1}}{\d z_k}(p)=O(1)  \quad  \textrm{and} \quad \frac{\d g_{1\bar \alpha}}{\d z_k}(p)=O(1)\]
For the second derivatives, the estimate $ \frac{\d^2 g_{1\bar 1}}{\d z_k \d \bar z_l}(p)=  O(t^{\beta-1}) $ from \eqref{eq:der5} is not sufficient as $A(t)t^{\beta-1}$ is not uniformly bounded above. So we have to be more precise, and extract from \eqref{eq:g} the refined information: 
\[ \frac{\d^2 g_{1\bar 1}}{\d z_k \d \bar z_l}(p)=  \frac{\d^2 a}{\d z_k \d \bar z_l}(p) \cdotp A(t)+O(1)= O(A(t))\]
hence
\[R_{1 \bar 1 k \bar l} \, |u_1|^2 v_k \bar v_l = O(1)\]
thanks to \eqref{vect}.\\

\noindent
The second case is slightly more involved. By \eqref{eq:der6}, we get
\[\frac{\d^2 g_{1\bar k}}{\d z_1 \d \bar z_l}(p)=  O(t^{\beta-1/2}) \] and also
\[ g^{\alpha \bar \beta}\frac{\d  g_{1 \bar \alpha}}{\d z_1}  \frac{\d g_{\beta \bar k}}{\d \bar z_l} = 
\begin{cases}
O(t^{\beta-1/2}) & \textrm{if \,} \alpha, \beta \neq 1\\
O(A(t)^{-1}t^{\beta-3/2})  & \textrm{if \,} \alpha=1, \beta \neq 1\\
O(A(t)^{-1}t^{\beta-1/2})  & \textrm{if \,} \alpha \neq 1, \beta=1 \\
O(A(t)^{-1}t^{\beta-3/2})  & \textrm{if \,} \alpha =\beta=1 
\end{cases} \]
which is in any case dominated by $O(A(t)^{-1}t^{\beta-3/2}) $. As a result, 
\begin{eqnarray*}
R_{1\bar k 1 \bar l} \,  u_1\bar u_k v_1 \bar v_l&=&O(A(t)^{-1}t^{\beta-1/2})+O(A(t)^{-2}t^{\beta-3/2})\\
&=&O(t^{1/2}(-\log t)^2)+O(t^{1/2-\beta}(-\log t)^4) \\
&=&O(1)
\end{eqnarray*}
as $\beta \in (0, \beta_0]$ with $\beta_0<1/2$.\\

$\bullet$ \textbf{Case 4:} one index is equal to $1$.

\noindent
We only have to consider $R_{1 \bar j k \bar l}$. To start with, \eqref{eq:der6} provides us with:
$\frac{\d^2 g_{1\bar j}}{\d z_k \d \bar z_l}(p)=  O(t^{\beta-1/2})$ which is not precise enough as $A(t)^{-1/2}t^{\beta-1/2}$ is not uniformly bounded. So we go back to the precise expression \eqref{eq:g} to get:
\begin{eqnarray*}
\frac{\d^2 g_{1\bar j}}{\d z_k \d \bar z_l}(p)&=&\bar z_1 A(t) \frac{\d ^2 \bar b_j}{\d z_k \d \bar z_l}(p)+O(1)\\ 
&=& O(t^{1/2}A(t))
\end{eqnarray*}
Moreover, we have:
\begin{equation}
\label{eq:last}
 g^{\alpha \bar \beta}\frac{\d  g_{1 \bar \alpha}}{\d z_k}  \frac{\d g_{\beta \bar j}}{\d \bar z_l} = O(1) 
 \end{equation}
independently of whether $\alpha$ or $\beta$ is equal to $1$, thanks to \eqref{eq:der1}-\eqref{eq:der3}. Therefore, 
\begin{eqnarray*}
R_{1\bar j k \bar l} \,  u_1\bar u_j v_k \bar v_l &=& O(t^{1/2}A(t)^{1/2})+O(1)\\
&=&O(t^{\beta/2})+O(1) \\
&=&O(1)\\
\end{eqnarray*}

$\bullet$ \textbf{Case 5:} no index is equal to $1$.

\noindent
In that case, it follows from \eqref{eq:der7} that 
\[\frac{\d^2 g_{i\bar j}}{\d z_k \d \bar z_l}(p)=  O(1)\]
which we combine with the second estimates of \eqref{eq:der2}-\eqref{eq:der3} to obtain
\[R_{i\bar j k \bar l} \,  u_i\bar u_j v_k \bar v_l =O(1)\]
This concludes the proof of Theorem \ref{curv}.

\section{Proof of Theorem B}
\label{sec:unif}
Let us start by recalling the setting of the first sections, with slightly different notations. $X$ is still a compact Kähler manifold and $D$ is a smooth divisor such that $K_X+D$ is ample. Therefore, for $\beta>0$ small enough, there exists a unique metric $\omv\in c_1(K_X+(1-\beta)D)$ such that $\Ric \omv = -\omv + (1-\beta)[D]$. We fix a reference Kähler form $\om \in c_1(K_X+D)$, and denote by $\theta$ a smooth representative of $c_1(D)$, that we choose to be the curvature of our fixed smooth hermitian metric on $\Ox(D)$. We want to compare $\omv$ to the reference conic metric $\omb=\om-\ddc \log\left[\frac{1-|s|^{2\beta}}{\beta}\right]^2$ constructed in \S \ref{sec:curv}. We proceed in two steps; first we compare the potentials (zero order estimate) and then the metrics themselves (Laplacian estimate).\\

The first thing to do is consider the suitably normalized potential of $\omv$. For cohomological reasons, there exists $\tilde \vp_{\beta}$ such that $\omv= \om-\beta \theta+\ddc \tilde \vp_{\beta}$. Now, given the Kähler-Einstein equation satisfied by $\omv$, there exists a volume form $dV$ (independent of $\beta$) and a constant $C_{\beta}$ such that $\omv^n= \frac{e^{\tilde \vp_{\beta}+C_{\beta}}dV}{|s|^{2(1-\beta)}}$. We then normalize $\tilde\vp_{\beta}$ so that $C_{\beta}=0$. As a result, $\omv$ is solution of the following equation 
$$(\om-\beta \theta+\ddc \tilde \vp_{\beta})^n= \frac{e^{\tilde \vp_{\beta}}dV}{|s|^{2(1-\beta)}}$$
If we introduce the potential $\pb:=-\log\left[\frac{1-|s|^{2\beta}}{\beta}\right]^2$ of $\omb$, then we can reformulate the equation above in terms of the potential $\vpb:=\tilde \vp_{\beta}-\pb$:
\begin{equation}
\label{eq:norm}
(\omb-\beta \theta+\ddc \vpb)^n= e^{\vpb+\Fb}\omb^n
\end{equation}
where 
\[\Fb=\pb+\log\left( \frac{dV}{\sbb \omb^n}\right)\]
From Lemma \ref{lem:met}, we see that $\sbb \omb^n = \frac{\beta^2}{(1-\sb)^2}e^{O(1)}dV$, which implies that $\Fb=O(1)$. This observation enables us to prove the following:

\begin{prop}
\label{prop:est0}
There exists $C>0$ such that for all $\beta>0$ small enough, we have $\sup_{X\smallsetminus D} |\vpb| \le C$.
\end{prop}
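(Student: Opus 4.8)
The plan is to read off both bounds from the Monge--Amp\`ere equation \eqref{eq:norm} by the maximum principle, feeding in the two uniform facts already established: that $\Fb=O(1)$ and that $\omb-\beta\theta$ is uniformly comparable to $\omb$. Indeed, Lemma~\ref{lem:met} gives $\omb\ge\frac12\om$, while $|\beta\theta|\le C\beta\om$ on the compact manifold $X$; hence, for $\beta$ small, $\frac12\omb\le\omb-\beta\theta\le 2\omb$. Morally, at a maximum of $\vpb$ one has $\ddc\vpb\le 0$, so $\omv=\omb-\beta\theta+\ddc\vpb\le 2\omb$, whence $e^{\vpb+\Fb}\le 2^n$ and $\vpb\le C$; a minimum would give the reverse bound. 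The genuine difficulty — the reason the application of the maximum principle is ``slightly subtle'' — is that $\vpb$ is only smooth on $X\smallsetminus D$, and its extrema could a priori be attained on $D$, where neither the equation nor the Hessian inequality is available.

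To localize the extrema away from $D$, I would perturb $\vpb$ by the Poincar\'e-type weight $w:=\log(-\log|s|^2)$, which tends to $+\infty$ along $D$ and has a complex Hessian of controlled sign: since $\om_P=\om-\ddc\log\log^2|s|^2$ is a positive Kähler current, one has $\ddc w=\tfrac12(\om-\om_P)$, so $\ddc w\le\tfrac12\om$ and $-\ddc w\le\tfrac12\om$, the unbounded part $-\tfrac12\om_P$ always carrying the favorable sign. For the upper bound, fix $\ep>0$ and maximize $\vpb-\ep w$: as $\vpb$ is bounded on $X$ for each fixed $\beta$, this function tends to $-\infty$ near $D$, so it attains its maximum at some $x_\ep\in X\smallsetminus D$. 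There $\ddc\vpb\le \ep\,\ddc w\le\tfrac{\ep}{2}\om\le\ep\,\omb$, hence $\omv\le(1+2C\beta+\ep)\,\omb$, and \eqref{eq:norm} yields $\vpb(x_\ep)\le n\log(1+2C\beta+\ep)+\sup_X|\Fb|\le C_1$ uniformly in $\beta$. Since $w\ge 0$ (after the normalization $|s|^2\le e^{-1}$ of \S\ref{sec:curv}), every $y\in X\smallsetminus D$ satisfies $\vpb(y)-\ep w(y)\le \vpb(x_\ep)-\ep w(x_\ep)\le \vpb(x_\ep)\le C_1$; letting $\ep\to 0$ gives $\vpb(y)\le C_1$, i.e. $\sup_{X\smallsetminus D}\vpb\le C_1$.

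The lower bound follows symmetrically by minimizing $\vpb+\ep w$, which tends to $+\infty$ near $D$ and thus has an interior minimum $x_\ep$; there $\ddc\vpb\ge-\ep\,\ddc w=-\tfrac{\ep}{2}(\om-\om_P)\ge-\tfrac{\ep}{2}\om\ge-\ep\,\omb$, so $\omv\ge(1-2C\beta-\ep)\,\omb\ge\tfrac12\omb$, and \eqref{eq:norm} gives $\vpb(x_\ep)\ge-C_2$ uniformly; since $w\ge0$, the same limiting argument as above yields $\inf_{X\smallsetminus D}\vpb\ge-C_2$. I expect the boundary behaviour just handled to be the only real obstacle: the maximum principle has to be applied on the open manifold $X\smallsetminus D$, and the whole point of the weight $w$ is to confine the extrema there while contributing a Hessian term of the correct sign — this is where the positivity of $\om_P$ is essential. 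Everything else is already uniform in $\beta$, so the final constant $C=\max(C_1,C_2)$ is independent of $\beta$, as claimed.
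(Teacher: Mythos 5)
Your proof is correct and follows essentially the same route as the paper: the paper also applies the maximum principle on $X\smallsetminus D$ to an $\ep$-perturbation of $\vpb$ that blows up at $D$ and then lets $\ep\to 0$, the only difference being the barrier --- the paper uses $\pm\ep\log|s|^2$, whose Hessian on $X\smallsetminus D$ is exactly $\mp\ep\theta$, hence bounded with no positivity input needed, whereas your weight $w=\log(-\log|s|^2)$ requires the positivity of $\om_P$, which you correctly identify and use. One small slip to fix: your intermediate claim ``$-\ddc w\le\tfrac12\om$'' is false near $D$ (it would force $\om_P\le 2\om$, contradicting the Poincar\'e blow-up of $\om_P$), but it is harmless since the inequality you actually apply at the minimum, namely $-\tfrac{\ep}{2}(\om-\om_P)\ge-\tfrac{\ep}{2}\om$, only uses $\om_P\ge 0$.
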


\begin{proof}
We know that $\vpb$ is qualitatively bounded, and in order to make it quantitative, we would like to use the maximum principle. Unfortunately, $X\smallsetminus D$ is not compact, and $\omb$ is not complete either, so we a little more work is needed. We introduce for each $\ep>0$ the function $\chi_{\beta, \ep}:=\vpb+\ep \log |s|^2$. By construction, it is bounded above at attains its maximum on $X\smallsetminus D$, at a point say $x_{\ep}$. Using that $\ddc \vpb = \ddc \cbe+ \ep \theta$, we obtain that  $\ddc \vpb(x_{\ep})\le \ep \theta(x_{\ep})$. As a consequence, $(\omb-\beta \theta + \ddc \vpb)^n(x_{\ep}) \le (\omb-(\beta+\ep)\theta)^n(x_{\ep})\le 2^n \omb^n(x_{\ep})$ for $\ep, \beta$ small enough. Therefore, $(e^{\vpb+\Fb}\omb^n)(x_{\ep}) \le 2^n \omb^n(x_{\ep})$, or also $\vpb(x_{\ep})\le -F_{\beta}(x_{\ep})+n \log 2$, and thus $\vpb(x_{\ep})\le -\inf \Fb+n \log 2$. Take now an arbitrary $x\in X\smallsetminus D$; using the definition of $x_{\ep}$ and the fact that $|s|^2<1$, we end up with:
\begin{eqnarray*}
\vpb(x)&=&\cbe(x)-\ep (\log |s|^2)(x)\\
& \le & \vpb(x_{\ep})+\ep (\log |s|^2)(x_{\ep})-\ep (\log |s|^2)(x) \\
& \le & -\inf \Fb+n \log 2 -\ep (\log |s|^2)(x)
\end{eqnarray*}
Making $\ep$ go to zero ($x$ is fixed), we finally obtain $\sup \vpb \le C$ for some uniform constant $C$. 

\noindent
For the minimum, we can reproduce the same argument with $\tilde \chi_{\beta, \ep}:=\vpb-\ep \log |s|^2$, and obtain that $\inf \vpb \ge - \sup \Fb+ n \log 2\ge -C$ which proves the proposition.
\end{proof}

With this estimate at hand, one can take advantage of the boundedness of the curvature of $\omb$ to get the Laplacian estimate, which is the content of Theorem B:

\begin{prop}
There exists $C>0$ such that for all $\beta$ small enough, we have
\[C^{-1} \omb \le \omv \le C \omb\]
\end{prop}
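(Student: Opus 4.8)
The plan is to run a second-order (Laplacian) estimate comparing $\omv$ to the reference conic metric $\omb$, in the spirit of the proof of Proposition \ref{lap}, but now exploiting the decisive fact that the holomorphic bisectional curvature of $\omb$ is bounded \emph{uniformly in $\beta$} (Theorem \ref{curv}), together with the uniform bound $|\vpb|\le C$ just obtained (Proposition \ref{prop:est0}). First I would apply Siu--Yau's inequality (Lemma \ref{klem}) with $\om:=\omb$ and $\om':=\omv=\omb-\beta\theta+\ddc\vpb$; writing $B>0$ for the uniform curvature bound and using $\omv^n=e^{\vpb+\Fb}\omb^n$ from \eqref{eq:norm}, this gives
\[\Delta_{\omv}\log\tr_{\omb}(\omv)\ge \frac{\Delta_{\omb}(\vpb+\Fb)}{\tr_{\omb}(\omv)}-B\,\tr_{\omv}(\omb).\]
Here $\Delta_{\omb}\vpb=\tr_{\omb}(\omv)-n+\beta\tr_{\omb}\theta\ge -C$ because $\omv\ge 0$ and $\theta$ is dominated by $\omb$ (recall $\omb\ge\frac12\om$), while $\Delta_{\omb}\Fb$ is bounded uniformly, which one checks by differentiating the explicit formula for $\Fb$ and invoking Lemma \ref{lem:met} and the bounded geometry of $\omb$. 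Since AM--GM yields $\tr_{\omb}(\omv)\ge n\,e^{(\vpb+\Fb)/n}\ge c>0$, the first term on the right is bounded below by a uniform constant.

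The second step is the usual maximum-principle trick. I would introduce $H:=\log\tr_{\omb}(\omv)-\lambda\vpb$ with $\lambda:=B+1$. Using $\Delta_{\omv}\vpb=n-\tr_{\omv}(\omb)+\beta\tr_{\omv}\theta$, the inequality above becomes
\[\Delta_{\omv}H\ge (\lambda-B)\,\tr_{\omv}(\omb)-\lambda\beta\tr_{\omv}\theta-C,\]
and after dominating $\theta$ by $\omb$ and shrinking $\beta_0$ so that $\lambda\beta\tr_{\omv}\theta\le\frac12\tr_{\omv}(\omb)$, I obtain $\Delta_{\omv}H\ge \frac12\tr_{\omv}(\omb)-C'$. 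Since $X\smallsetminus D$ is non-compact and $\omv$ incomplete, $H$ need not attain its supremum, so—exactly as in Proposition \ref{prop:est0}—I would first work with the perturbed function $H_{\ep}:=H+\ep\log|s|^2$, which tends to $-\infty$ along $D$ (both $\omv$ and $\omb$ being genuine conic metrics of angle $2\pi\beta$, $\tr_{\omb}(\omv)$ is qualitatively bounded there) and hence attains its maximum at an interior point $x_{\ep}$. The extra term contributes $-\ep\tr_{\omv}\theta$ to $\Delta_{\omv}H_{\ep}$, which is again absorbed by $\frac12\tr_{\omv}(\omb)$ for $\ep$ small.

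At $x_{\ep}$ the maximum principle forces $\tr_{\omv}(\omb)(x_{\ep})\le 2C'$, and then the elementary inequality $\tr_{\omb}(\omv)\le (\tr_{\omv}(\omb))^{n-1}\,\omv^n/\omb^n=(\tr_{\omv}(\omb))^{n-1}e^{\vpb+\Fb}$ bounds $\tr_{\omb}(\omv)(x_{\ep})$ uniformly. Feeding this back into $H_{\ep}(x_{\ep})$ and using $|\vpb|\le C$, $|\Fb|\le C$, and $\log|s|^2\le 0$, I deduce a uniform upper bound for $H_{\ep}$, hence for $\log\tr_{\omb}(\omv)+\ep\log|s|^2$, over all of $X\smallsetminus D$; letting $\ep\to 0$ gives $\tr_{\omb}(\omv)\le C$, i.e. $\omv\le C\omb$. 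The reverse bound is then purely algebraic: at each point the eigenvalues $\mu_1,\dots,\mu_n$ of $\omv$ relative to $\omb$ satisfy $\prod_i\mu_i=e^{\vpb+\Fb}\in[c,C]$ and $\sum_i\mu_i\le C$, so $\mu_i=\prod_j\mu_j/\prod_{j\ne i}\mu_j\ge c/C^{n-1}>0$, whence $\omb\le C\omv$. I expect the main obstacle to be the combination of non-compactness and incompleteness of $(X\smallsetminus D,\omv)$: one must justify, uniformly in $\beta$, both the boundedness of $\Delta_{\omb}\Fb$ up to $D$ and the qualitative boundedness of $\tr_{\omb}(\omv)$ that makes the barrier $\ep\log|s|^2$ effective, since without the latter the maximum could escape toward the divisor.
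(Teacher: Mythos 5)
Your proof is correct, but it runs the second-order estimate in the opposite direction from the paper. The paper applies Chern--Lu's formula to the identity map $(X\smallsetminus D,\omv)\to (X\smallsetminus D,\omb)$, so that the ``domain Ricci'' term is computed exactly from the Einstein equation $\Ric\omv=-\omv$ and only the \emph{upper} bisectional bound of Theorem~\ref{curv} enters; this yields $\Delta_{\omv}\log\tra\ge -1-2B\,\tra$ with no need to differentiate $\Fb$ at all, and the maximum principle (with the same barrier $\ep\log|s|^2$ and the same perturbed function $H_\ep$) then bounds $\tra=\tr_{\omv}\omb$ first. You instead apply Siu--Yau's inequality (Lemma~\ref{klem}) with base $\omb$, in the pattern of Proposition~\ref{lap}, which estimates $\tr_{\omb}\omv$ first but costs you two extra verifications the paper's route avoids: a uniform bound on $\Delta_{\omb}(\vpb+\Fb)$, and the AM--GM lower bound $\tr_{\omb}\omv\ge c>0$. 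Both are fine, but note that your justification of $\Delta_{\omb}\Fb=O(1)$ is slightly misattributed: since $\ddc\Fb=(\omb-\om)+\Ric\omb+(1-\beta)\theta+\chi$ on $X\smallsetminus D$ (with $\chi$ a fixed smooth form), the essential input is not Lemma~\ref{lem:met} alone but the curvature bound of Theorem~\ref{curv}, which controls the scalar-curvature term $\tr_{\omb}\Ric\omb$ uniformly in $\beta$; the remaining terms are handled by $\omb\ge\frac12\om$. With that point made precise, your argument closes correctly, and your final determinant/eigenvalue step deducing $C^{-1}\omb\le\omv$ from $\tr_{\omb}\omv\le C$ and $\omv^n/\omb^n=e^{\vpb+\Fb}\in[c,C]$ makes explicit a step the paper leaves implicit (it proves $\omb\le C\omv$ via $\tra\le C$ and tacitly recovers the other direction the same way). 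Net assessment: same barrier trick, same key inputs (Theorem~\ref{curv} and Proposition~\ref{prop:est0}), but a genuinely different choice of which trace to estimate; the paper's choice is more economical because the Kähler--Einstein condition makes the Ricci term free, while yours is more self-contained in mirroring Proposition~\ref{lap} and in spelling out the algebraic endgame.
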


\noindent
One may probably emphasize again that we already know the existence of such a constant for each $\beta$, and the new feature is that one can choose $C$ to be independent of $\beta$.

\begin{proof}
The key inequality that we are going to rely on is Chern-Lu's formula applied to the identity function $\mathrm{id}:(X\smallsetminus D, \omv) \to (X\smallsetminus D, \omb)$. By definition, $\Ric \omv = - \omv$, and we know from Proposition \ref{curv} that there exists a universal constant $B>0$ such that $\mathrm{Bisec} \, \omb \le B$, so Chern-Lu formula yields:
\[\Delta_{\omv}(\log \tra) \ge -1 - 2B \tra\]
Now, $\omv= \omb-\beta \theta + \ddc \vpb$ and there exists $M>0$ such that $\theta \le M \omb$. Take $\beta \le 1/2M$; then $-\ddc \vpb \ge \frac 1 2 \omb - \omv$, and therefore $-\Delta_{\omv} \vpb \ge \frac 1 2 \tra - n$. As a result, if we set $A:=4(B+1)$, we get:
\[\Delta_{\omv}(\log \tra-A\vpb) \ge 2 \tra-C\]
where $C=1+4n(B+1)$. We set $H:=\log \tra -A\vpb$; we want to apply the maximum principle to this function, but as for the zero order estimate, we need to be cautious. So for each $\ep>0$, we introduce $\He:=H+\ep \log |s|^2$; by the previous inequality, this function satisfies $\Delta_{\omv}  \He  \ge 2 \tra - C - \ep \, \tr_{\omv}\theta$, and if one assumes that $\ep<1/M$, then $\ep \, \tr_{\omv}\theta \le \tra$ hence
\[\Delta_{\omv}  \He \ge \tra -C\]
As $\He$ tends to $-\infty$ near $D$ and since $H$ is (qualitatively) bounded on $X\smallsetminus D$, we can pick a point $x_{\ep}$ such that $\He$ attains its maximum at $x_{\ep}$. From the maximum principle, we get that $\tra(x_{\ep})\le C$. Therefore, if $x\in X\smallsetminus D$, one has:
\begin{eqnarray*}
\log \tra(x) &=& H(x)-A \vpb(x)\\
& = & \He(x)+A\vpb(x)-\ep (\log |s|^2)(x)\\
& \le & \He(x_{\ep})+A\vpb(x)-\ep (\log |s|^2)(x)\\
& \le & \log \tra(x_{\ep})-A\vpb(x_{\ep})+A\vpb(x)+\ep (\log |s|^2 )(x_{\ep})-\ep (\log |s|^2)(x)\\
& \le & C -\ep (\log |s|^2)(x)
\end{eqnarray*} 
as $\sup |\vpb|$ is under control by Proposition \ref{prop:est0} and $\log |s|^2 \le 0$. Making $\ep$ go to zero, we obtain the desired result.
\end{proof}

\section{Convergence in energy}
\label{sec:energy}

We refer to \cite{BBGZ, BBEGZ} for any further details/applications regarding the notions involved in this section.

Let $X$ be a compact Kähler manifold, $\om$ a Kähler metric, $V=\int_X \om^n$ its volume. Given a \textit{bounded} $\om$-psh function $\psi$, the energy of $\psi$ is 
is defined using Bedford-Taylor product: $$E(\psi) = \frac{1}{(n+1)V}\sum_{k=0}^n \int_X \psi \, (\om+\ddc \psi)^k \wedge \om^{n-k}$$ When $\vp$ is an arbitrary $\om$-psh function, one defines
\[E(\vp):=\inf \{E(\psi) \, | \psi \in \PSH(X,\om) \cap L^{\infty}(X), \,\psi \ge \vp\}\]
and the space of finite energy function is $\mathcal E^1(X, \om):=\{\vp \in PSH(X, \om), E(\vp)>-\infty\}$. As the Monge-Ampère operator is not continuous with respect to the usual $L^1$ topology, it is convenient to introduce the following stronger topology, cf \cite[Definition 2.1]{BBEGZ}:


\begin{defi}
The strong topology on $\mathcal E^1(X,\om)$ is defined as the coarsest refinement of the weak topology such that $E$ becomes continuous.
\end{defi}

With this terminology, we will say that a sequence or family $(\vp_j)$ of functions in $\mathcal E^1(X,\om)$ converges \textit{in energy} to $\vp \in \mathcal E^1(X,\om)$ if the convergence happens in the strong topology.\\

Let us go back to the main setting of this paper where $D$ is a smooth divisor such that $K_X+D$ is ample. Then one can choose $\om$ a Kähler form in $c_1(K_X+D)$, $\theta$ a smooth representative of $c_1(D)$, and we denote by $\omb$ $(\beta \in (0,\beta_0) say)$  the negatively curved conic Kähler-Einstein metric; it converges to the cuspidal Kähler-Einstein metric $\om_0$. The metric $\omb$ lives in the same cohomology class as $\om-\beta \theta$, and it converges weakly to $\om_0$, so one can find a family of normalized potentials $\vpb$ for $\omb$ ($\beta \in [0, \beta_0)$ still) such that $\vpb$ converges to $\vp_0$ in the $L^1$ topology. We would like to improve the weak convergence of these potentials into a strong convergence. 

One of the main issues is that $\omb$ and $\om_0$ do not live in the same cohomology class, so it is a priori not clear whether $\vpb$ is $\om$-psh. Actually, we do not know how to prove it without using Theorem B, from which it is though an obvious consequence. Indeed, we know that there exists a uniform constant $C>0$ such that $\omb$ dominates $C$ times the model conic metric, which itself dominates $\frac 1 2 \om$. Therefore, $\om + \ddc \vpb = \omb + \beta \theta \ge \frac 1 2\om + \beta \theta$ which is Kähler for $\beta$ small enough. \\

Therefore $\vpb \in \PSH(X,\om)$; as $\vpb$ happens to be bounded it follows immediately that $\vpb \in \mathcal E^1(X,\om)$. So it would make sense to study whether the weak converges is strong. This is the content of the following theorem:

\begin{theo}
The potentials $\vpb$ converge in energy toward $\vp_0$. 
\end{theo}

\begin{proof}
We first claim that there exists a uniform constant $C>0$ such that
\begin{equation}
\label{claim}
|\vpb-\vp_0| \le C -\vp_0
\end{equation}
Indeed, we proved that $\vpb= \psi_{\beta} +O(1)$, where $\psi_{\beta}=-2\log(1-\sb)/\beta$
satisfies $0\ge\psi_{\beta}\ge - 2 \log(- \log |s|^2)$ by Lemma \ref{lem:elem}. To be completely rigorous, one should add that the $\vpb$ here is a normalized version of the potential used in the previous section. But as these potentials are converging to an $\om$-psh function, their suprema admit a uniform bound, so we can ignore this detail. \\

Now, remember that we want to show that $E(\vpb) \to E(\vp_0)$; we will deal with each of the summands of the terms in the energy functional separately. For each integer $k\in [0,n]$, we have
\begin{eqnarray*}
(\om+\ddc \vpb)^k &=& (\omb+\beta \theta)^k \\
 &=& \omb^k + \sum_{j=1}^k \beta^j \theta^{j} \wedge \omb^{k-j}
\end{eqnarray*}
As $\theta$ is smooth, there exists $C>0$ such that $-C\om^j \le\theta^j \le C\om^j$, hence $0 \le \theta^j+C\om^j \le 2C \om^j$, and multiplying by $\omb^{k-j}$ we get 
$\theta^{j} \wedge\omb^{k-j} \le 2C \om^j \wedge \omb^{k-j}$. Moreover, from Theorem B, $\omb$ dominates uniformly a small multiple of $\om$, so up to increasing $C$, we get $(\om+\ddc \vpb)^k \le C \omb^{k}$. Thanks to Theorem B again, we know that $\omb \le C \om_0$, so that in the end $(\om+\ddc \vpb)^k \le C \om_0^{k}$.
Combining this with \eqref{claim}, we obtain a constant $C>0$ such that for all $k\in [0,n]$, we have the domination:
\[|\vpb-\vp_0| \, (\om+\ddc \vpb)^{k}\wedge \om^{n-k} \le C (1-\vp_0) \, \om_0^n\]

\noindent
As $\vp_0 \in L^1(\om_0)$ and $\vpb \to \vp_0$ smoothly on $X\smallsetminus D$, Lebesgue dominated convergence theorem shows that 
\[ \int_X \vpb \,  (\om+\ddc \vpb)^{k}\wedge \om^{n-k} \quad \underset{\beta\to 0}{\longrightarrow} \quad \int_X \vp_0 \, \om_0^{k}\wedge \om^{n-k} \]
which shows that $E(\vpb) \to E(\vp_0)$ when $\beta$ goes to $0$.

\end{proof}

\section{Convergence of the rescaled metrics}
In this whole paragraph, we will slightly change notation and denote by $\omb$ the Kähler-Einstein cone metric previously denoted $\omvp$.

\subsection{Cylindrical metrics}
\label{cyl}
In the next section, we will see that a suitable rescaling of $\omb$ gives rise at the limit to a very particular type of metrics on $\Cx$ that we are going to call cylindrical. 

\begin{defi}
Let $\pi:  \CC^{n} \to \Cx$ be the universal cover of $\Cx$ given by $\pi(z_1, \ldots, z_n)=(e^{z_1}, z_2, \ldots, z_n)$. A Kähler metric $\om$ on $\Cx$ is called \textit{cylindrical} if $\pi^*\om$ is isometric to $\om_{\rm eucl}$ by a complex linear transformation, i.e. there exists $g\in \Gln$ such that $\pi^*\om = g^*\ome$.
\end{defi}

These metrics are quasi-isometric to the standard cylindrical metric $\frac{idz_1\wedge d \bar z_1}{|z_1|^2}+\sum_{k\ge 2} idz_k \wedge d\bar z_k$ hence complete. Also, because $\pi$ is a local biholomorphism, any two such metrics are locally (holomorphically) isometric, but it is not obvious whether they are globally holomorphically isometric or not. We are going to investigate this question in this section. For the time being, let us try to give an explicit description of cylindrical metrics. Basically, they are just push-forward by $\pi$ of a Kähler metric on $\CC^n$ with constant coefficients. Of course the push-forward will in general produce a current, but here $\pi^{-1}(z_1, \ldots, z_n)= (\log z_1, z_2, \ldots, z_n)$ defined locally induces a globally defined form $\pi_* dz_1= (\pi^{-1})^*dz_1 = \frac{dz_1}{z_1}$. Therefore, if $\om:=\sum_{j,k} a_{j\bar k} \, idz_j \wedge d \bar z_k $ is a metric with constant coefficients, then the push-forward $\pi_*\om$ is given by
\[\pi_*\om=a_{1\bar 1} \frac{idz_1\wedge d\bar z_1}{|z_1|^2}+\sum_{j=2}^n \left( a_{1\bar j} \,i \frac{dz_1}{z_1}\wedge d\bar z_j+a_{j\bar 1} \,idz_j\wedge  \frac{d\bar z_1}{\bar z_1}\right) +\sum_{j,k\ge 2} a_{j\bar k} \, idz_j \wedge d \bar z_k \]
This is the general form of a cylindrical metric, as long as $(a_{j\bar k})_{j,k} $ is an hermitian definite positive matrix. \\

Let us now identify when two cylindrical metric are holomorphically isometric. We consider two such metrics $\om, \om'$ i.e. $\om = g^* \ome$ and $\om' = g'^*\ome$ for $g,g' \in \Gln$ and we assume that there exists $f: \Cx \to \Cx$ a biholomorphic map such that $f^*\om'=\om$. We are going to show that $f$ is necessarily of the form
\[f(z_1, \mathbf{w}) = (z_1^{\pm 1}, A \mathbf {w})\] 
for some matrix $A\in \mathrm{GL}(n-1, \CC)$ and where $\mathbf {w}=(z_2, \ldots, z_n)$. It will show that the only cylindrical metrics
holomorphically isometric to 
\[a_{1\bar 1} \frac{idz_1\wedge d\bar z_1}{|z_1|^2}+\sum_{k=2}^n \left( a_{1\bar k} \, i \frac{dz_1}{z_1}\wedge d\bar z_k+a_{k\bar 1} \, idz_k\wedge  \frac{d\bar z_1}{\bar z_1}\right) +\sum_{j,k\ge 2} a_{j\bar k} \, idz_j \wedge d \bar z_k\] 
are the ones obtained from it by the transformation $z_1\mapsto 1/z_1$ and a complex linear transformation of the $(n-1)$ variables $z_2, \ldots, z_n$. As $z_1\mapsto 1/z_1$ leaves  $\frac{dz_1\wedge d\bar z_1}{|z_1|^2}$ invariant and turns $\frac{dz_1}{z_1}\wedge d\bar z_j$ into its opposite, it acts the same way as some complex linear transformation on $z_2, \ldots, z_n$. We can go a little bit further: let us set $M=(a_{i\bar j})_{2\le i,j\le n}$, and $X={}^t  (a_{1\bar 2}, \ldots, a_{1\bar n})$. 
There exists $P\in \mathrm{GL}(n-1, \CC)$ such that $P^*MP= \mathrm{Id}$, and replacing $P$ by $UP$ for $U\in \mathrm{U}(n-1,\CC)$ preserves this property. We can choose $U$ such that $UPX=  (||PX||, 0, \ldots, 0)$, so that the holomorphic transformation $f(z_1, \mathbf{w}) = (z_1, UP \mathbf {w})$ maps the above metric to 
\[a \frac{idz_1\wedge d\bar z_1}{|z_1|^2}+\left( b \, i\frac{dz_1}{z_1}\wedge d\bar z_2+b \, i dz_2\wedge  \frac{d\bar z_1}{\bar z_1}\right) +\sum_{k\ge 2} idz_k \wedge d \bar z_k\]
where $a,b$ are positive numbers such that $a>b^2$. Moreover, any two such metrics are holomorphically isometric if and only if they have same coefficients $a$ and $b$ ($b$ being required to be positive). In particular a cylindrical metric is determined by its trace and determinant. \\

Let us now prove the claim above. By the lifting theorem for maps, the map $f \circ \pi$ can be lifted to a holomophic map $\bar f : \CC^n \to \CC^n$ sending $0$ to $0$:
$$ \xymatrix{
    \CC^n \ar[r]^{\bar f} \ar[d]_{\pi}  & \CC^n \ar[d]^{\pi}  \\
     \Cx \ar[r]^f & \Cx }$$
Moreover, $\bar f^* g'^*\ome = \bar f^*\pi^* \om'= \pi^*f^*\om' = g^*\ome$ so that $(g'\circ \bar f \circ g^{-1})^*\ome = \ome$. As a consequence, $g'\circ \bar f \circ g^{-1} \in \mathrm{U}(n,\CC)$ which implies that $\bar f\in \Gln$. 

So $\bar f=(\bar f_1, \ldots, \bar f_n)$ is a linear isomorphism of $\CC^n$ that descends to $f$. Therefore, if $x,y \in \CC^n$ satisfy $\pi(x)=\pi(y)$, we must have $\pi(\bar f(x))=\pi(\bar f(y))$. So we have both $\bar f_1(z_1+2i\pi, \mathbf{w}) = \bar f_1(z_1, \mathbf{w}) + 2ik\pi$ for some $k\in \Z$ and $\bar f_j( z_1+2ik\pi, \mathbf{w}) =\bar f_j( z_1, \mathbf{w})$
for any $j \ge 2$. This shows that 
\[\bar f=\begin{pmatrix}
k & 0 & \cdots & 0 \\
0 & & &\\
\vdots & & A& \\
0 & & & 
\end{pmatrix}\] 
for some invertible matrix $A \in \mathrm{GL}(n-1, \CC)$ which in turn proves that $f(z_1, \mathbf{w}) = (z_1^{k}, A \mathbf {w})$. As $f$ was supposed to be one-to-one, we must have $k\in \{1, -1\}$, which proves the claim. 
 
\subsection{Convergence to a cylindrical metric}
We are going to consider a small neighborhood of $D$ and rescale the metric $\omb$ there to study its asymptotic behavior near $D$. Typically, let us work in a coordinate chart $(z_1, \ldots, z_n)$ where $D=(z_1=0)$. If $\D:=\{(z_i); \forall i, |z_i| \le 1\}$, it is not hard to see that the completion of $( \D^n \smallsetminus (z_1=0), \om_{\beta, \mathrm{mod}})$ where
\[\om_{\beta, \rm mod}:= \frac{\beta^2 i dz_1 \wedge d\bar z_1}{|z_1|^{2(1-\beta)}(1-|z_1|^{2\beta})^2}+\sum_{k=2}^n i dz_k \wedge d\bar z_k \]
is given by $( \D^n,d_{\beta})$ where $d_{\beta}$ satisfies 
\[d_{\beta}(0,z) \simeq \frac 1 2 \log \left( \frac{1+|z_1|^{\beta}}{1-|z_1|^{\beta}}\right)+\sum_{k=2}^n |z_k| \]
where $\simeq$ means "is equivalent up to universal constants to".
This can be seen using the equivalence of the norms $\sum |z_k|$ and $\sqrt{\sum |z_k|^2}$ and the fact that a primitive of $\frac{\beta r}{r^{1-\beta}(1-r^{2\beta})}$ is given by 
$\frac 12 \log \left( \frac{1+r^{\beta}}{1-r^{\beta}}\right)$. 

\noindent
Therefore $B(0,r,\om_{\beta, \mathrm{mod}})$ is "equivalent" to the polydisk 
\[ \left\{(z_1, \ldots, z_n); \, |z_1|^{\beta} < \frac{1-e^{-2r}}{1+e^{-2r}} \quad \& \quad \forall k \ge 2, |z_k| < r   \right\}\]

\noindent 
These observations enable us to realize that if $p\in D$, the ball of a given radius (say $1$) centered at $p$ for the (completion of) $\omb$ is essentially given by the neighborhood of $D$ defined as $\{z\in X; \, |s(z)|^{2\beta}<e^{-1}\}$. Therefore, we set:
$$V_{\beta}:=\left\{z\in X\smallsetminus D; \, |s(z)|^{2\beta}<e^{-1}\right\}.$$
and we are going to study the convergence of $(V_{\beta}, r^{-1}_{\beta} \omb)$ for some suitable sequence $r_{\beta} \to 0$. For that kind of sets (non compact ones), an appropriate notion of convergence is the pointed Gromov-Hausdorff convergence. So we may localize the situation: given a trivializing open set $U$ for $X$ meeting $D$, we set: $U_{\beta}:=V_{\beta}\cap U$. This is a subset of $\CC^*\times \CC^{n-1}$, that we will endow with the rescaled metric $\beta^{-2}\omb$. The main result of this section is that the family $(\beta^{-2}\omb)_{0<\beta \le \beta_0}$ is relatively compact and that all its cluster values are cylindrical metrics:

\begin{theo}
\label{thm:gh}
Let $(\beta_n)_{n\in \N}$ be a sequence of positive numbers converging to $0$. Then up to extracting a subsequence, there exists a cylindrical metric $\om_{\rm cyl}$ on $\CC^*\times \CC^{n-1}$ such that the metric spaces $(U_{\beta_n}, \beta_n^{-2}\om_{\beta_n})$ converge in pointed Gromov-Hausdorff topology to $(\CC^*\times \CC^{n-1},  \om_{\rm cyl})$ when $n$ tends to $+\infty$.
\end{theo}

\noindent
Let us give a few remarks about this result:

$\--$ By this convergence mode, we mean that for any $p\in \CC^*\times \CC^{n-1}$ and any $r>0$, there exists a sequence of points $p_{n} \in U_{\beta_n}$ such that the closed balls $\bar B_{p_{n}}(r,\beta_n^{-2}\om_{\beta_n})\subset U_{\beta_n}$ converge in Gromov-Hausdorff topology to $\bar B_{p}(r,\om_{\rm cyl})\subset \CC^*\times \CC^{n-1}$.

$\--$ We are actually going to prove a much more precise result, as the convergence will be showed to happen in the $\mathscr C^{\infty}_{\rm loc}$ topology on $\CC^*\times \CC^{n-1}$, cf proof below for the precise meaning of this statement. 

$\--$ We do not know whether the family of metrics $\beta^{-2}\omb$ converges when $\beta$ tends to zero. Indeed, there could be different cluster values because of the non uniqueness of cylindrical metrics, so it does not follow from the result above. Actually, we believe that this is a difficult question.

\begin{proof}
The proof works in three main steps: first we understand the rescaling of the model metric, then we work with the family of Kähler-Einstein metrics to prove the local convergence, and finally we show that all limits are cylindrical. In the fourth step, which is very standard, we make explicit how to deduce the Gromov-Hausdorff convergence from the more precise smooth convergence. 

Let us first introduce the notation $\D(a,b) := \{(z_i) \in \CC^*\times \CC^{n-1}; |z_1|<a \, \, \mathrm{and} \, \, |z_k| <b \, \, \mathrm{for } \, k\ge 2\}$. 
In all the following, we are working on $U_{\beta}=\D(e^{-\frac{1}{2\beta}}, 1)$. \\

\noindent 
\textbf{Step 1. The model case}

$\bullet$  We first endow $U_{\beta}$ with $\beta^{-2}\ombm$ where:
\[\ombm :=\frac{\beta^2 i dz_1 \wedge d\bar z_1}{|z_1|^{2(1-\beta)}(1-|z_1|^{2\beta})^2}+\sum_{k=2}^n i dz_k \wedge d\bar z_k \]
Consider the rescaling:
\[
\begin{array}{cccc}
\Psi_{\beta}:& \D(e^{\frac{1}{2\beta}}, \beta^{-1})  & \longrightarrow &U_{\beta}= \D(e^{-\frac{1}{2\beta}}, 1)  \\
 &(w_1, w_2, \ldots, w_n)& \mapsto & (e^{-\frac{1}{\beta}}w_1, \beta w_2, \ldots, \beta w_n)
\end{array}
\]
It is a diffeomorphism, and we have 
\[\Psi_{\beta}^*(\beta^{-2} \ombm) =\frac{ e^{-2} |w_1|^{2\beta} }{(1-e^{-2}|w_1|^{2\beta})^{2}}\cdotp \frac{i dw_1 \wedge d\bar w_1}{|w_1|^2}+\sum_{k=2}^n i dw_k \wedge d\bar w_k \]

\noindent
As $|w_1|^{2\beta}$ converges to $1$ on $\CC^*$, given any compact $K \subset \CC^*\times \CC^{n-1}$, then $K\subset \D(e^{\frac{1}{2\beta}}, \beta^{-1}) $ for $\beta$ small enough (depending on $K$), and $\Psi_{\beta}^*(\beta^{-2}\ombm)$ converges to the cylindrical metric
\[\frac{ e^{-2} }{(1-e^{-2})^{2}}\cdotp \frac{i dw_1 \wedge d\bar w_1}{|w_1|^2}+\sum_{k=2}^n i dw_k \wedge d\bar w_k \]
in $\mathscr C^{\infty}(K)$.\\

$\bullet$ In terms of potentials, here is what is happening: $\ombm =\ddc \vp_{\beta, \mathrm{mod}}$ where
\[\vp_{\beta, \mathrm{mod}}(z)= -\log\left(\frac{1-|z_1|^{2\beta}}{\beta}\right)^2+\sum_{k=2}^n |z_k|^2\]
Therefore, 
\begin{equation}
\label{mod}
\Psi_{\beta}^*(\beta^{-2}\vp_{\beta, \mathrm{mod}})(w)= -\beta^{-2}\log\left( 1-e^{-2}|w_1|^{2\beta} \right)^2+\beta^{-2} \log \beta^2 + \sum_{k=2}^n |w_k|^2 
\end{equation}
Now, the expansion $|w_1|^{2\beta} = \sum_{i=0}^{+\infty} \frac{\left(\log |w_1|^2 \right)^i}{i!}\beta^i$ yields the following Taylor expansion (for $\beta \to 0$):
\begin{eqnarray*}
\Psi_{\beta}^*(\beta^{-2}\vp_{\beta, \mathrm{mod}})(w)&=&\beta^{-2} \log \beta^2 - \log(1-e^{-2})^2\beta^{-2} + a \log |w_1|^2 \beta^{-1} + \frac{a(1-a)}{2} \log^2 |w_1|^2 \\
&& +\sum_{k=2}^n |w_k|^2+O(\beta) 
\end{eqnarray*}
where $a=e^2/(1-e^{-2})$. Moreover, this series converges uniformly on compact subsets of $\CC^*\times \CC^{n-1}$. This expansion is consistent with the convergence of $\Psi_{\beta}^*(\beta^{-2}\ombm)$ obtained above as $\beta^{-2} \log \beta^2 - \log(1-e^{-2})^2\beta^{-2} + a \log |w_1|^2 \beta^{-1}$ is polyharmonic on $\CC^*\times \CC^{n-1}$ whereas $\ddc \log^2 |w_1|^2 = 2 dw_1 \wedge d \bar w_1 /|w_1|^2$.\\

\noindent 
\textbf{Step 2. Local smooth convergence}

\noindent
Let us now consider $\ombb:=\Psi_{\beta}^*(\beta^{-2}\omb)$; by Theorem B, there exists $C>0$ independent of $\beta$ such that: 
\[C^{-1} \Psi_{\beta}^*(\beta^{-2}\ombm)\le \ombb \le C \, \Psi_{\beta}^*(\beta^{-2}\ombm) \]
and therefore, given any compact set $K \subset  \CC^*\times \CC^{n-1}$, there exists a constant $C_K$ such that the comparison
\begin{equation}
\label{comp}
C_K^{-1} \ome \le \ombb \le C_K\,  \ome 
\end{equation}
is valid on $K$ (for $\beta$ small enough). 

Although $U_{\beta}$ is not simply connected, $\ombm$ admits a potential on this set, hence so does $\omb$. Therefore, $\ombb$ admits potentials on $ \D(e^{\frac{1}{2\beta}}, \beta^{-1})$.

We want to show that up to a renormalization, "the" potential of $\ombb$  and all its derivatives are uniformly bounded on $K$. It turns out that operating a sup-normalization to our globally defined potential $\vpb$ is not good enough to ensure this and that we have to subtract to $\vpb$ a carefully chosen pluriharmonic function. Let us get into the details. Thanks to \eqref{comp}, the currents $\ombb$ are uniformly bounded in mass on $K$, so by weak compactness of positive currents, they admit potentials $\vpbb$ uniformly bounded in $L^1_{\rm loc}$ norm, hence also in $L^p_{\rm loc}$ norm for any given $p>1$. Now, because of \eqref{comp} again, we know that $\Delta \vpbb$ is uniformly bounded on $K$, and therefore, so is 
\begin{equation}
\label{C0}
||\vpbb||_{L^{\infty}(K)} \le C
\end{equation}
for some uniform constant $C$ thanks to standard local properties for solutions of elliptic equations, cf \cite[Theorem 8.17]{Gilb}.\\

We will now extract all the information we can out of the Kähler-Einstein equation 
\[\Ric \ombb = -\beta^2 \ombb\]
satisfied by $\ombb=\ddc \vpbb$ on $ \D(e^{\frac{1}{2\beta}}, \beta^{-1})$. If we denote by $dV$ the euclidian volume form $dV:=\ome^n/n!$, we deduce from the previous equation that the function $\Hb:=\log\left( (\ddc \vpbb)^ne^{-\beta^2\vpb}/dV\right)$ is polyharmonic. In terms of $\vpbb$, the Kähler-Einstein equation satisfied by $\ombb$ becomes the following Monge-Ampère equation:
\begin{equation}
\label{mar}
(\ddc \vpbb)^n = e^{\beta^2 \vpbb + \Hb}dV
\end{equation}
Estimate \eqref{comp} shows that $\beta^2 \vpb + \Hb= O(1)$, hence $H_{\beta}=O(1)$ thanks to \eqref{C0}. By the standard properties of (pluri)harmonic function, we deduce:
\begin{equation}
\label{ph}
||H_{\beta}||_{\mathscr C^{k}(K)} \le C_k
\end{equation}
for some constants $C_k$ depending only on $k,K$ (and not $\beta$). \\

The next step is the $\mathscr C^{2,\alpha}$ estimate. The operator defined by $$F_{\beta}(\vp)=\log \left( \frac{(\ddc \vp)^n}{dV}\right) - \beta^2 \vp$$ is uniformly elliptic, concave as a function of $\ddc \vp$, so it is governed by Evans-Krylov theory. In particular, the $\mathscr C^{2, \alpha}$ norm of $\vp$ on $K $ is controlled by $||\vp||_{L^{\infty}(K')}$, $||\Delta \vp||_{L^{\infty}(K')}$ and $||F_{\beta}(\vp)||_{C^{0,1}(K')}$ given any compact set $K'$ containing $K$ in its interior. As $F_{\beta}(\vpbb)= H_{\beta}$ satisfies the estimate \eqref{ph} above and the solution $\vpbb$ satisfies \eqref{comp}-\eqref{C0}, we infer the existence of $\alpha\in(0,1)$ and $C>0$ such that
\[||\vpbb ||_{\mathscr C^{2,\alpha}(K)} \le C\]

From there, we deduce that the linear operator $\Delta_{\ombb}-\beta^2$ has coefficients whose $\mathscr C^{\alpha}$ norm is uniformly bounded on $K$. As this operator is the linearization of $F_{\beta}$ and as $F_{\beta}(\vpbb)=H_{\beta}$ has uniformly bounded derivatives on $K$, Schauder estimates guarantee that every derivative of $\vpbb$ is bounded on $K$ (as $K$ is arbitrary here). By Arzelà-Ascoli theorem the family $(\vpbb)_{0<\beta\le \beta_0}$ is relatively compact for the $\mathscr C^{\infty}(K)$ topology.   \\

\noindent 
\textbf{Step 3. Identification of the limit as cylindrical}

\noindent
Let $\bar \om_0$ be a cluster value of the family $(\ombb)$, realized as the limit of a sequence $\bar \omega_{\beta_n}$ where $\beta_n>0$ tends to $0$. Here, the convergence happens in $\mathscr C^{\infty}_{\rm loc}(\CC^*\times \CC^{n-1})$. By the estimate \eqref{comp} combined with the fact that $\Ric \ombb = -\beta^2 \ombb$, the metric $\bar \om_0$ would be a Ricci-flat metric quasi-isometric to $\om_{\rm cyl}$; pulling it back to $\CC^n$ by the universal cover $\pi:(z_1, \ldots, z_n) \mapsto (e^{z_1}, z_2, \ldots, z_n)$, $\pi^*\bar \om_0$ would be a Ricci-flat metric isometric to the euclidian metric. 
Therefore one would have $(\pi^*\bar \om_0)^n = e^H \om_{\rm eucl}^n$ for some bounded pluriharmonic function $H$ on $\CC^n$. So $H$ should be a constant function and by Liouville Theorem, there exists an element $g \in \mathrm{GL}(n,\CC)$ such that $\pi^*\bar \om_0=g^*\om_{\rm eucl}$. Therefore $\bar \om_0$ is a cylindrical metric. \\

\noindent 
\textbf{Step 4. From smooth to Gromov-Hausdorff convergence}

\noindent
To lighten notation, let us drop the index $n$ is this paragraph. We proved that $\ombb$ converges to a cylindrical metric $\bar \om_0$
locally smoothly. So given any $p\in \CC^*\times \CC^{n-1}$ and any $r>0$, we have $B_p(2r, \bar \om_0)\subset \D(e^{\frac{1}{2\beta}}, \beta^{-1})$ for $\beta$ small enough. So if we set $p_{\beta}=\Psi_{\beta}(p)$, the ball $B_{p_{\beta}}(r,\beta^{-2}\omb)$ is isometric to $B_p(r,\ombb)$ and the smooth convergence of $\ombb$ to $\bar \om_0$ gives both:

$\cdotp$ $\, B_p(r, \ombb) \subset B_p(2r, \bar \om_0)$ for $\beta$ small enough;

$\cdotp$ $\, B_p(r, \ombb)$ converges to $B_p(r, \bar \om_0)$ in the Gromov-Hausdorff topology.

\noindent
The second point can be see from the fact that the norm of the gradient of the identity map $(\D(e^{\frac{1}{2\beta}},\beta^{-1}), \ombb) \to (\D(e^{\frac{1}{2\beta}},\beta^{-1}), \bar \om_0)$ and its inverse both tend uniformly to $1$ on any given compact set. So this provides the expected $\ep$-isometry.  
\end{proof}

\backmatter

\bibliographystyle{smfalpha}
\bibliography{biblio}

\end{document}